\definecolor{alert}{rgb}{0.8,0,0}
\newcommand{\alert}[1]{\textbf{\textcolor{alert}{#1}}}
\newcommand{\alertm}[1]{%
	\marginpar{%
		\ifodd\value{page} \raggedright \else \raggedleft \fi
		\footnotesize{\alert{#1}}
	}
}
\newcommand{\N}{\mathbb{N}}
\newcommand{\R}{\mathbb{R}}
\newcommand{\C}{\mathbb{C}}
\newcommand{\s}{\mathbb{S}}
\newcommand{\h}{\mathbb{H}}
\newcommand{\E}{\mathbb{E}}
\newcommand{\M}{\mathbb{M}}
\newcommand{\Sl}{\mathrm{Sl}}
\newcommand{\X}{\mathfrak{X}}
\newcommand{\df}{\,\mathrm{d}}
\newcommand{\sm}{\smallsetminus}
\newcommand{\p}{\partial}
\DeclareMathOperator{\im}{Im}
\newcommand{\id}{\mathrm{id}}
\newcommand{\wt}{\widetilde}
\newcommand{\wh}{\widehat}
\renewcommand{\div}{\mathrm{div}}
\newcommand{\Ric}{\mathrm{Ric}}
\newtheorem{theorem}{Theorem}[section]
\newtheorem{proposition}[theorem]{Proposition}
\newtheorem{corollary}[theorem]{Corollary}
\newtheorem{lemma}[theorem]{Lemma}
\theoremstyle{definition}
  \newtheorem{definition}[theorem]{Definition}
\theoremstyle{remark}
\newtheorem{remark}[theorem]{Remark}
\numberwithin{equation}{section}
\title{On the classification of Killing submersions and their isometries}
\author{Jos\'{e} M. Manzano}
\address{Dpto. de Geometr\'{\i}a  y Topolog\'{\i}a \\
Universidad de Granada \\
18071 Granada, SPAIN} 
\email{jmmanzano@ugr.es}
\thanks{Research partially supported by the Spanish MCI research projects MTM2007-61775 and MTM2011-22547, and the Junta de Andaluc\'{i}a Grant P09-FQM-5088.}
\subjclass[2000]{Primary 53C42; Secondary 53C30}
\keywords{surfaces admitting a unit Killing vector field, Riemannian submersions, homogeneous spaces}
\begin{document}

\begin{abstract}
A Killing submersion is a Riemannian submersion from an orientable $3$-manifold to an orientable surface whose fibers are the integral curves of a unit Killing vector field in the $3$-manifold. We classify all Killing submersions over simply-connected Riemannian surfaces and give explicit models for many Killing submersions including those over simply-connected constant Gaussian curvature surfaces. We also fully describe the isometries of the total space preserving the vertical direction. As a consequence, we prove that the only simply-connected homogeneous $3$-manifolds which admit a structure of Killing submersion are the $\E(\kappa,\tau)$-spaces, whose isometry group has dimension at least $4$.
\end{abstract}

\maketitle

\section{Introduction}

Simply-connected homogeneous Riemannian $3$-manifolds with isometry group of dimension $4$ or $6$ different from $\h^3$ can be represented by a $2$-parameter family $\E(\kappa,\tau)$, where $\kappa,\tau\in\R$. They include $\R^3$, $\s^3$, $\h^2\times\R$, $\s^2\times\R$, the Heisenberg group, the Berger spheres and the universal cover of the special linear group $\Sl_2(\R)$ endowed with a left-invariant metric, see~\cite{Daniel07,DHM09,mp11}. The $\E(\kappa,\tau)$-spaces are $3$-manifolds admitting a global unit Killing vector field whose integral curves are the fibers of a certain Riemannian submersion over the simply-connected constant Gaussian curvature surface $\M^2(\kappa)$. In the Riemannian product $3$-manifolds $M\times\R$, the projection over the first factor is a Riemannian submersion whose fibers are also the trajectories of a unit Killing vector field. In general, Riemannian submersions sharing this property will be called Killing submersions (see~\cite{EO,RST} and Definition~\ref{defi} below).

Constant mean curvature surfaces in $\E(\kappa,\tau)$ and $M\times\R$ have been extensively studied during the last decade and many results have been extended to the Killing submersion setting very recently (see~\cite{DL07,DL08,EO,LR,RST} for instance). Nevertheless, apart from aforementioned spaces, the theory of Killing submersions suffers from a lack of examples. It is necessary to mention that these $3$-manifolds are well-understood at the level of Differential Topology (see~\cite{Bes,GHV, Ste}) since the projection defines principal bundles with totally geodesic fibers. Nevertheless, the objective of this paper is to classify them in the Riemannian category provided that the base is simply-connected, and give explicit models depending on the base surface and a special geometric function, the so-called \emph{bundle curvature}.

Let $\pi:\E\to M$ be a differentiable submersion from a Riemannian $3$-manifold $\E$ onto a surface $M$. A vector $v\in T\E$ will be called \emph{vertical} when $v\in\ker(d\pi)$ and \emph{horizontal} when $v\in\ker(d\pi)^\bot$. The submersion $\pi$ is Riemannian when it preserves the length of horizontal vectors.

\begin{definition}\label{defi}
The Riemannian submersion $\pi:\E\to M$, where $\E$ and $M$ are connected and orientable, is called a Killing submersion if it admits a complete vertical unit Killing vector field.
\end{definition}

The definition is not as restrictive as it may seem, for all $3$-manifolds admitting a unit Killing vector field $\xi$ can be locally endowed of a Killing submersion structure. Namely, we can induce in a surface transverse to $\xi$ the metric of the distribution orthogonal to $\xi$. Since each each integral curve of $\xi$ locally intersects only once the transverse surface, this intersection locally defines a Killing submersion over the surface.

The bundle curvature of a Killing submersion $\pi:\E\to M$ is defined (see Lemma~\ref{lema:curvaturafibrado}) as the unique function $\tau\in \mathcal{C}^\infty(\E)$ satisfying
\[\overline\nabla_X\xi=\tau X\wedge\xi,\quad\text{for all }X\in\X(\E),\]
where $\xi$ is a vertical unit Killing vector field in $\E$. The bundle curvature is constant along the fibers of $\pi$ so it can be seen as a function $\tau\in \mathcal{C}^\infty(M)$ (in Propositions~\ref{prop:holonomia} and~\ref{prop:ejemplos_s2xR}, more geometric interpretations of the bundle curvature will be shown). This gives rise to some natural questions: Given a Riemannian surface $M$ and $\tau\in \mathcal{C}^\infty(M)$, does there exist a Killing submersion over $M$ with bundle curvature $\tau$? Is it unique? The main objective in Sections~\ref{sec:riemannian-properties} and~\ref{sec:existence} will be to give affirmative answer to these questions when $M$ is simply-connected. More specifically, we will classify Killing submersions up to isomorphism, in the following sense:

\begin{definition}\label{defi:isomorphism}
Let $\pi:\E\to M$ and $\pi':\E'\to M'$ be two Killing submersions. A (local) isomorphism of Killing submersions from $\pi$ to $\pi'$ is a pair $(f,h)$, where $h:M\to M'$ is an isometry and $f:\E\to\E'$ is a (local) isometry, such that $\pi'\circ f=h\circ\pi$. 
\end{definition}

Note that if $(f,h)$ is an isomorphism of Killing submersions, then $f$ maps fibers of $\pi$ into fibers of $\pi'$, and, if we consider a unit vertical Killing vector field $\xi$ in $\E$, then $f_*\xi$ is also a unit vertical Killing vector field in $\E'$.

Given a simply-connected Riemannian surface $M$ and $\tau\in \mathcal{C}^\infty(M)$, we will show that there exists a Killing submersion over $M$ with bundle curvature $\tau$, and it is unique (up to isomorphism) if the total space $\E$ is also simply-connected. In the process, it will turn out that the bundle curvature determines locally the geometry of the submersion, but the topology of $\E$ is also conditioned by the bundle curvature. More explicitly:
\begin{itemize}
 \item If $M$ is a topological disk, then the submersion is isomorphic to the projection $\pi_1:M\times\R\to M$, $\pi_1(p,t)=p$, for some Riemannian metric on $M\times\R$ such that $\partial_t$ is a unit vertical Killing vector field. In particular, the fibers of the submersion have infinite length.
 \item If, on the contrary, $M=(\s^2,g)$ for some Riemannian metric $g$, then we shall distinguish cases depending on whether the \emph{total bundle curvature} $T=\int_M\tau$ vanishes or not:
\begin{itemize}
 \item If $T=0$, then $\pi$ is isomorphic to $\pi_1:\s^2\times\R\to(\s^2,g)$, $\pi_1(p,t)=p$, for some metric on $\s^2\times\R$ such that $\partial_t$ is a unit vertical Killing vector field, so the fibers have infinite length.
 \item If $T\neq 0$, then $\pi$ is isomorphic to $\pi_{\text{Hopf}}:\s^3\to(\s^2,g)$ given by $\pi_{\text{Hopf}}(z,w)=\left(2z\bar w,(|z|^2-|w|^2\right)$,
where $\s^3\subset\C^2$ is endowed with a metric such that $\frac{\pi}{T}(iz,iw)$ is a unit vertical Killing vector field. In this case, the fibers have length $|2T|$.
\end{itemize}
\end{itemize}
When the total space is not simply-connected, Killing submersions over $M$ are also classified as the quotients of those listed above under a vertical translation (i.e., an element of the $1$-parameter group of isometries associated to the unit Killing vector field).

Though this theoretical description is exhaustive, we will give explicit models for a wide class of Killing submersions. Firstly, those over a disk with a conformal metric in terms of the conformal factor; the obtained examples will generalize the metrics for the $\E(\kappa,\tau)$-spaces in~\cite{Daniel07}. Secondly, we will obtain a general method to produce trivial Killing submersions (i.e., admitting a global smooth section) over any surface by isometrically embedding it in $\R^n$ for some $n\geq 3$. Finally, explicit models will also be obtained for Killing submersions over the round sphere $\s^2(\kappa)$ via the Hopf fibration (generalizing the metrics of the Berger spheres in~\cite{Tor12}).

The geometries of $M$ and $\E$ of a Killing submersion $\pi:\E\to M$ are well-related, and geodesics or isometries are good samples of that. On the one hand, geodesics of $\E$ can be divided into three different types: vertical ones, horizontal ones (which are horizontal lifts of geodesics of $M$) and those which are neither vertical nor horizontal, each of which makes a constant angle with the vertical direction and whose projection is well-understood (see Proposition~\ref{prop:killing-geodesicas}). In particular, $M$ is complete if and only if $\E$ is complete. On the other hand, a beautiful classification result is obtained when we look for what we will call \emph{Killing isometries} (i.e., isometries of $\E$ preserving the vertical direction).

Let $\pi:\E\to M$ be a Killing submersion, $\E$ and $M$ simply-connected, with bundle curvature $\tau\in \mathcal{C}^\infty(M)$. Then:
\begin{itemize}
 \item[a)] Given a Killing isometry $f:\E\to\E$, there exists a unique isometry $h:M\to M$ such that $\pi\circ f=h\circ\pi$. Moreover, $\tau\circ h=\tau$ if $f$ is orientation-preserving and $\tau\circ h=-\tau$ if it is orientation-reversing.
 \item[b)] Reciprocally, given an isometry $h:M\to M$ and $p_0,q_0\in\E$ satisfying $h(\pi(p_0))=\pi(q_0)$, the following properties hold:
 \begin{itemize}
  \item If $h\circ\tau=\tau$, then there is a unique orientation-preser\-ving Killing isometry $f:\E\to\E$ with $\pi\circ f=h\circ\pi$ and $f(p_0)=q_0$.
  \item If $h\circ\tau=-\tau$, then there is a unique orientation-reversing Killing isometry $f:\E\to\E$ with $\pi\circ f=h\circ\pi$ and $f(p_0)=q_0$.
 \end{itemize}
\end{itemize}

This construction provides a surjective group morphism from the group of Killing isometries of $\E$ to the group of isometries of $M$ which either preserve $\tau$ or map it to $-\tau$. The kernel of this morphism is the subgroup of isometries of $\E$ that leave the fibers invariant (i.e., the kernel consists of vertical translations, but also contains the symmetries with respect to a horizontal slice when $\tau=0$). In particular, $1$-parameter groups of isometries of $M$ preserving $\tau$ give rise to $1$-parameter groups of isometries in $\E$. Such groups have proven to be essential in surface theory, since they give rise to many geometric features, e.g., holomorphic quadratic differentials (see~\cite{AR}) and conjugate constructions (see~\cite{ManTor}) are related to them.

Finally, note that simply-connected homogeneous $3$-manifolds are classified: they are all isometric to Lie groups endowed with left-invariant metrics except for $\s^2(\kappa)\times\R$, where $\kappa>0$ (see~\cite[Theorem~2.4]{mp11}). 
In section~\ref{sec:homogeneos}, we will characterize the homogeneous spaces $\E(\kappa,\tau)$ as the only simply-connected $3$-dimensional homogeneous spaces admitting a Killing submersion structure (see Theorem~\ref{thm:killing-homogeneos}). Hence, the only Killing submersions whose total space is isometric to a Lie group endowed with a left invariant metric are the $\E(\kappa,\tau)$-spaces, except for $\s^2(\kappa)\times\R$.

\noindent\textbf{Acknowledgement.} I would like to express my gratitude to Hojoo Lee, Pablo Mira and Joaquín Pérez for suggesting some improvements in the preparation of this paper. I would also like to thank Luis Guijarro and Carlos Ivorra for showing me some useful references.

\section{Uniqueness results}\label{sec:riemannian-properties}

\subsection{The bundle curvature} 

We will begin by defining the bundle curvature. The following result can be found in~\cite[Proposition 2.6]{EO} but we will include the proof here for completeness.

\begin{lemma}\label{lema:curvaturafibrado}
Let $\pi:\E\to M$ be a Killing submersion. Then, there exists a function $\tau\in \mathcal{C}^\infty(\E)$ such that $\overline\nabla_X\xi=\tau X\wedge\xi$ for all $X\in\X(\E)$.

The function $\tau$ will be called the \emph{bundle curvature} of the submersion.
\end{lemma}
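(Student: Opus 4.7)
The plan is to exploit that $\xi$ is a \emph{unit} Killing field on an oriented $3$-manifold, which forces the endomorphism $X\mapsto \overline\nabla_X\xi$ to have a very rigid shape at each point. Fix $p\in\E$ and consider $A_p:T_p\E\to T_p\E$ given by $A_p(X)=\overline\nabla_X\xi$. Since $\xi$ is Killing, the equation $\langle\overline\nabla_X\xi,Y\rangle+\langle\overline\nabla_Y\xi,X\rangle=0$ holds for all $X,Y$, so $A_p$ is skew-adjoint. Since $\langle\xi,\xi\rangle=1$, differentiating yields $\langle A_p(X),\xi\rangle=0$, so $A_p$ takes values in $\xi^\perp$. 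Combining both, $\langle A_p(\xi),X\rangle=-\langle A_p(X),\xi\rangle=0$ for every $X$, so $A_p(\xi)=0$; as a byproduct, the fibers of $\pi$ are geodesics.

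The key observation is now that $A_p$ restricts to a skew-adjoint endomorphism of the oriented $2$-dimensional Euclidean space $\xi^\perp\subset T_p\E$ (the orientation being the one induced by the orientation of $\E$ together with $\xi$). Every skew-adjoint endomorphism of such a space is a scalar multiple of the $\pi/2$-rotation. On the other hand, the map $B_p:T_p\E\to T_p\E$ defined by $B_p(X)=X\wedge\xi$ is also skew-adjoint, vanishes on $\xi$, and restricts on $\xi^\perp$ to (plus or minus) the $\pi/2$-rotation, which is nowhere zero. Therefore, there exists a unique real number $\tau(p)$ such that $A_p=\tau(p)B_p$, i.e., $\overline\nabla_X\xi=\tau(p)X\wedge\xi$ for every $X\in T_p\E$.

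It only remains to check that $p\mapsto\tau(p)$ is smooth. For this, I would pick any local orthonormal frame $\{e_1,e_2,\xi\}$ adapted to the submersion on a neighbourhood of $p$ (such a frame exists since $\xi$ is smooth and unitary), and observe that
\[
\tau=\langle\overline\nabla_{e_1}\xi,\,e_1\wedge\xi\rangle=-\langle\overline\nabla_{e_1}\xi,e_2\rangle,
\]
which is a smooth function of the Levi-Civita connection and the frame. Since $\tau$ is independent of the choice of frame, this gives $\tau\in\mathcal{C}^\infty(\E)$ globally.

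The only real subtlety in the whole argument is a bookkeeping one, namely fixing the sign of $X\wedge\xi$ so that $B_p$ restricts to an actual rotation (rather than its opposite) on $\xi^\perp$; this is settled once and for all by the convention $\langle X\wedge Y,Z\rangle=\det(X,Y,Z)$ relative to the chosen orientation of $\E$, and the definition of $\tau$ simply absorbs whatever sign one picks. Apart from this, no step involves anything beyond the Killing equation, the unit length of $\xi$, and the linear algebra of skew-adjoint operators on an oriented plane.
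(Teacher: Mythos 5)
Your proof is correct and is essentially the paper's argument in operator-theoretic clothing: skew-adjointness of $X\mapsto\overline\nabla_X\xi$ (the Killing equation), its vanishing on $\xi$ (unit length), and the fact that skew-adjoint endomorphisms of the oriented plane $\xi^\perp$ form a line spanned by $X\mapsto X\wedge\xi$ replace the paper's direct verification that $\tau_X=\tau_Y$ for horizontal $X,Y$. Your explicit smoothness check via a local orthonormal frame is a small welcome addition that the paper leaves implicit.
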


\begin{proof}
First of all, note that $\overline\nabla_\xi\xi=0$. Indeed, given $X\in\X(\E)$, we have $\langle\overline\nabla_\xi\xi,X\rangle=-\langle\overline\nabla_X\xi,\xi\rangle=-\frac{1}{2}X\langle\xi,\xi\rangle=0$ since $\xi$ is Killing and unitary.

Let us now take $X\in\X(\E)$ linearly independent with $\xi$. On the one hand, it is clear that $\langle\overline\nabla_X\xi,\xi\rangle=0$ and, on the other hand, $\langle\overline\nabla_X\xi,X\rangle=0$ since $\xi$ is Killing. Then, there exists a unique function  $\tau_X\in \mathcal{C}^\infty(\E)$ such that $\overline\nabla_X\xi=\tau_XX\wedge\xi$, so it suffices to prove that $\tau_X$ does not depend on $X$. It is clear that $\tau_X$ only depends on the horizontal part of $X$ so it will be enough to prove  that $\tau_X=\tau_Y$ for all $X,Y\in\X(\E)$ horizontal. By using again that $\xi$ is a Killing vector field, we get
\[\tau_Y\langle Y\wedge \xi,X\rangle=\langle\overline\nabla_Y\xi,X\rangle=-\langle \overline\nabla_X\xi,Y\rangle=-\tau_X\langle X\wedge\xi,Y\rangle=\tau_X\langle Y\wedge \xi,X\rangle,\]
so $\tau_X=\tau_Y$ where $X$ and $Y$ are linearly independent. In the rest of points, the identity $\tau_X=\tau_Y$ follows from the linearity of the connection.
\end{proof}

Observe that the function $\tau$ in the conditions of Lemma~\ref{lema:curvaturafibrado} is unique and its sign depends on the choice of orientation in $\E$. We will give now some consequences of this result in order to fix some notation.

\begin{remark}\label{rmk:tau}~
\begin{enumerate}
 \item The condition  $\overline\nabla_\xi\xi=0$ tells us that the fibers of the submersion are geodesics of $\E$, which will be called \emph{vertical geodesics}.
 \item The elements of the $1$-parameter group of isometries $\{\phi_t\}_{t\in\R}$ associated to the Killing vector field $\xi$ will be called \emph{vertical translations}. 

Note that $\phi_t$ preserves the Killing field $\xi$ and the orientation in $\E$. Thus, if we apply $d\phi_t$ to the identity in Lemma~\ref{lema:curvaturafibrado}, we easily get $\tau=\tau\circ\phi_t$ for all $t\in\R$. This means that the bundle curvature is constant along the fibers and, hence, it may be consider as a function either in $\E$ or in the base $M$.
 \item More generally, let $(f,h)$ an isomorphism between two Killing submersions $\pi:\E\to M$ and $\pi':\E'\to M'$ (see Definition~\ref{defi:isomorphism}) and define $\tau\in \mathcal{C}^\infty(\E)$ and $\tau'\in \mathcal{C}^\infty(\E')$ as their bundle curvatures with respect to some orientations in $\E$ and $\E'$, respectively. Then, $\tau\circ f=\tau$ when $f$ preserves the orientation, or $\tau\circ f=-\tau'$ when $f$ reverses the orientation.
\end{enumerate}
\end{remark}

In the product spaces $M\times\R$ the projection over the first factor is a Killing submersion, so its bundle curvature is $\tau\equiv 0$ (from Lemma~\ref{lema:curvaturafibrado} it is easy to deduce that $\tau\equiv 0$ in a Killing submersion if and only if the horizontal distribution in the total space is integrable). Given $\kappa,\tau\in\R$, there exists a Killing submersion $\pi:\E(\kappa,\tau)\to\M^2(\kappa)$ with constant bundle curvature $\tau$. If $\kappa>0$ and $\tau\neq 0$, the projection is the Hopf fibration and we obtain the Berger spheres; in the rest of cases the fibers have infinite length. We refer the reader to~\cite{Daniel07} for a description of these examples, although Berger spheres from a global point of view can be found in~\cite{Tor12}.

Other examples derived from the aforementioned ones are their Riemannian quotients by a convenient vertical translation. Thus the length of the fibers will play an important role in the theory. Since fibers are geodesics, the following result follows from~\cite[Theorem 9.56]{Bes}.

\begin{lemma}\label{lema:longitud-fibras}
Let $\pi:\E\to M$ be a Killing submersion. Then all the fibers of $\pi$ share the same (finite or infinite) length.
\end{lemma}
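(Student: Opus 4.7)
The plan is to exhibit, for any two fibers, a fiber-to-fiber diffeomorphism that intertwines the vertical flow $\{\phi_t\}$, and then read off the length from the period of this flow.

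First I would parametrize each fiber by the Killing flow. Since $\xi$ is unit, complete and its integral curves are geodesics (Remark~\ref{rmk:tau}(1)), for any $p\in\E$ the curve $t\mapsto\phi_t(p)$ is a unit-speed geodesic parametrization of $\pi^{-1}(\pi(p))$. Therefore the length of this fiber equals the least $T>0$ with $\phi_T(p)=p$ (if such $T$ exists) and is $+\infty$ otherwise. Moreover, this ``period'' is the same for every point in a given fiber: if $q=\phi_s(p)$, then $\phi_T(q)=\phi_s(\phi_T(p))=\phi_s(p)=q$, so we can speak unambiguously of the length of a fiber.

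Next I would move between different fibers by horizontal lifting. Given $p,q\in\E$, choose a smooth curve $\gamma\colon[0,1]\to M$ with $\gamma(0)=\pi(p)$ and $\gamma(1)=\pi(q)$, which exists by connectedness of $M$. The horizontal distribution $\mathcal{H}=\ker(d\pi)^\bot$ is transverse to the (complete) fibers of $\pi$, so the standard argument for connections on fiber bundles with complete fiber action (this is the content of \cite[Theorem 9.56]{Bes}) gives a horizontal lift $\tilde\gamma\colon[0,1]\to\E$ with $\tilde\gamma(0)=p$. Set $q':=\tilde\gamma(1)\in\pi^{-1}(\pi(q))$, and analogously define a horizontal lift starting at each point of $\pi^{-1}(\pi(p))$; this yields a diffeomorphism $\Psi\colon\pi^{-1}(\pi(p))\to\pi^{-1}(\pi(q))$.

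The key observation is that $\Psi$ commutes with the flow $\phi_t$. Indeed, $\phi_t$ is an isometry preserving $\xi$, hence preserving the vertical distribution and (by orthogonality) also $\mathcal{H}$; therefore $\phi_t\circ\tilde\gamma$ is again a horizontal lift of $\gamma$, now starting at $\phi_t(p)$, so $\Psi(\phi_t(p))=\phi_t(\Psi(p))=\phi_t(q')$. Consequently $\phi_T(p)=p$ if and only if $\phi_T(q')=q'$, and by the previous paragraph this is equivalent to $\phi_T(q)=q$. Thus the periods of the two fibers coincide, finite or infinite, which is exactly what was claimed.

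The main obstacle is step 2: justifying that a global horizontal lift of $\gamma$ exists. This reduces to showing that horizontal lifts cannot blow up in finite time, which follows because $\pi$ is a locally trivial Riemannian submersion with complete one-dimensional fibers ruled by the Killing flow — the horizontal lift stays in a compact tube over any compact subarc of $\gamma$ by the $\phi_t$-equivariance of $\mathcal{H}$ and local trivializations. Once this is in place, the equivariance of $\Psi$ is immediate and the conclusion is purely formal.
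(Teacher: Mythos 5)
Your argument is correct, but it takes a different route from the paper: the paper disposes of this lemma in one line, observing that the fibers are (totally) geodesic and citing Hermann's theorem \cite[Theorem 9.56]{Bes}, whose content is that horizontal transport between fibers of such a submersion is an isometry, so all fibers are isometric. You instead give a self-contained proof exploiting the Killing flow: the length of a fiber is the (possibly infinite) minimal period of $\{\phi_t\}$ along it, and since $\phi_t$ preserves $\xi$ and hence the horizontal distribution, horizontal transport along a curve in $M$ is $\phi_t$-equivariant, so the periods of any two fibers agree. This buys an elementary argument that never needs the holonomy maps to be isometries, only equivariant bijections, at the cost of having to justify that horizontal lifts exist on the whole parameter interval. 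Your ``compact tube'' remark is the right idea but is the weakest point as written; note that this existence statement is exactly the horizontal-lift lemma the paper proves later (Section~\ref{sec:curves}), where in a flow trivialization the vertical coordinate satisfies $z'=\lambda(ax'+by')$, an ODE whose right-hand side does not involve $z$, so there is no finite-time blow-up -- invoking that computation (or, in the closed-fiber case, compactness of $\pi^{-1}$ of a compact subarc) would make your step fully rigorous. Also, your map $\Psi$ between fibers tacitly uses uniqueness of horizontal lifts (to see that $\phi_t\circ\wt\gamma$ \emph{is} the lift through $\phi_t(p)$); this is harmless but worth stating, and in fact only the single point $q'=\wt\gamma(1)$ together with equivariance is needed, not that $\Psi$ is a diffeomorphism.
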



%

\subsection{Local representation of a Killing submersion}\label{subsec:killing-estructura} Given a surface $M$ and $\tau\in \mathcal{C}^\infty(M)$, we are interested in finding all Killing submersions over $M$ with bundle curvature $\tau$. Let us begin by giving a useful technical tool that will simplify some reasonings along this paper.

\begin{proposition}\label{prop:existencia-seccion}
Let $\pi:\E\to M$ be a Killing submersion, and suppose that $M$ is noncompact. Then, $\pi$ admits a global smooth section $F:M\to\E$. Hence,
\[\Psi:M\times\R\to\E,\quad \Psi(p,t)=\phi_t(F(p)),\]
is a local diffeomorphism, where $\{\phi_t\}$ denotes the $1$-parameter group of vertical translations. Moreover, $\Psi$ is a global diffeomorphism if and only if the fibers of $\pi$ have infinite length.
\end{proposition}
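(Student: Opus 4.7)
The plan has three steps corresponding to the three assertions in the statement. First, I would establish the existence of the global smooth section $F$. Since $\xi$ is complete and nonvanishing, its flow $\{\phi_t\}_{t\in\R}$ is defined on all of $\E$ and its orbits are precisely the fibers of $\pi$, all of common length $L\in(0,+\infty]$ by Lemma~\ref{lema:longitud-fibras}. Hence $\pi:\E\to M$ is a principal $G$-bundle, where $G=\R$ if $L=+\infty$ and $G=\R/L\Z\cong\s^1$ if $L<+\infty$. Since $M$ is a noncompact orientable surface, it is homotopy equivalent to a $1$-dimensional CW complex, so $H^2(M;\Z)=0$. Principal $\R$-bundles over any base are trivial (the fiber is contractible), and principal $\s^1$-bundles are classified by their Euler class in $H^2(M;\Z)=0$, so they are trivial too. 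A global trivialization provides the desired smooth section $F:M\to\E$.

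Second, I would verify that $\Psi(p,t)=\phi_t(F(p))$ is a local diffeomorphism by inspecting its differential. Differentiating in the $t$-direction gives $\xi(\Psi(p,t))$, a nonvanishing vertical vector, while differentiating in $v\in T_pM$ gives $d\phi_t(dF_p(v))$. Since $\pi\circ F=\id_M$, $dF_p$ is injective and its image is transverse to the vertical fiber, and since $\phi_t$ is a fiber-preserving isometry, $d\phi_t$ sends this transverse subspace to another horizontal-transverse subspace. Thus $d\Psi_{(p,t)}$ is everywhere an isomorphism.

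Third, I would analyze the injectivity of $\Psi$. Surjectivity is automatic: for any $q\in\E$, the fiber over $p:=\pi(q)$ contains both $q$ and $F(p)$, and the flow acts transitively on that fiber. For injectivity, $\Psi(p_1,t_1)=\Psi(p_2,t_2)$ forces $p_1=p_2$ (applying $\pi$), and then $\phi_{t_1-t_2}(F(p_1))=F(p_1)$, i.e., $t_1-t_2$ is a period of the flow on the fiber through $F(p_1)$. If $L=+\infty$ there are no nontrivial periods and $\Psi$ is injective, hence a global diffeomorphism; if $L<+\infty$ the identity $\Psi(p,t+L)=\Psi(p,t)$ obstructs injectivity.

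The main obstacle is the first step, specifically the topological input that noncompact orientable surfaces satisfy $H^2(M;\Z)=0$, which is what trivializes a potentially nontrivial principal $\s^1$-bundle. This is precisely where the noncompactness hypothesis is indispensable: for compact bases such as $\s^2$, the Hopf-type fibrations appearing later in the paper provide genuine obstructions to the existence of a global section.
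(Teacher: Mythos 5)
Your proposal is correct and follows essentially the same route as the paper: obtain a global section from the triviality of the (circle or line) bundle over a noncompact surface, check that $d\Psi$ is an isomorphism, and observe that injectivity of $\Psi$ is equivalent to the fibers having infinite length. The only difference is cosmetic: the paper reduces to the finite-length case by quotienting by a vertical translation and cites Greub--Halperin--Vanstone for the section of the resulting circle bundle, whereas you supply the topological input yourself via $H^2(M;\Z)=0$ and handle the infinite-length case directly through the contractibility of the structure group $\R$.
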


\begin{proof}
We can suppose that the fibers of $\pi$ have finite length (otherwise, we take a quotient of $\pi$ under a vertical translation $\phi_{t}$ for some $t>0$). Then, $\pi$ is a codimension one circle bundle over a noncompact surface and~\cite[Section VII.5]{GHV} yields the existence of a global smooth section. Moreover, $\Psi$ is a local diffeomorphism since its differential is injective at every point.

Finally, note that $\Psi$ is a global diffeomorphism if and only if it is injective, but $\Psi(p',t')=\Psi(p,t)$ implies $p=p'$ since $\Psi(p',t')$ and $\Psi(p,t)$ belong to the same fiber of $\pi$, so last assertion in the statement holds.
\end{proof}

This result will be mostly used to ensure that there exists a smooth section $F:U\to\E$ for any coordinate chart $(U,\varphi)$ in $M$, but it also implies that exceptional topologies for the total space may only arise when the base is compact. Note that, if the base is compact, then Proposition~\ref{prop:existencia-seccion} no longer holds as the Hopf fibration from $\s^3$ to $\s^2$ shows. 

The following result will be the cornerstone of the subsequent development yielding a standard way of describing $\pi$ in terms of  $M$ and $\tau$.

\begin{proposition}\label{prop:trivializacion}
Let $\pi:\E\to M$ be a Killing submersion. Let $U\subset M$ be an open set such that there is a conformal diffeomorphism $\varphi:U\to\Omega\subset\R^2$. Then:
\begin{itemize}
 \item[a)] Given a smooth section $F_0:U\to\pi^{-1}(U)$, the transformation
\begin{equation}\label{eqn:isometria-local-killing}
\begin{array}{rcl}
f:\Omega\times\R&\longrightarrow&\pi^{-1}(U)\\
(x,y,t)&\longmapsto&\phi_t(F_0(\varphi^{-1}(x,y))
\end{array}
\end{equation}
is a local diffeomorphism and satisfies $\pi\circ f=\varphi\circ\pi_1$ in $\Omega\times\R$, where $\pi_1:\Omega\times\R\to\Omega$ is the proyection over the first factor.
 \item[b)] Let us write the induced metric in $\Omega$ as $\df s_\lambda^2=\lambda^2(\df x^2+\df y^2)$ for some $\lambda\in\mathcal{C}^\infty(\Omega)$ positive. Then, there exist $a,b\in\mathcal{C}^\infty(\Omega)$ such that the metric in $\Omega\times\R$ which makes $f$ a local isometry can be expressed as
\begin{equation}\label{eqn:metrica-general-killing}
\df s^2=\lambda^2(\df x^2+\df y^2)+(\df t-\lambda(a\df x+b\df y))^2.
\end{equation}
\item[c)]  $\pi_1:(\Omega\times\R,\df s^2)\to (\Omega,\df s_\lambda^2)$ is a Killing submersion with unit Killing vector field $\partial_t$, and $(f,\varphi^{-1})$ is a local isomorphism from $\pi_1$ to $\pi$.
\end{itemize}
Moreover, if the fibers of $\pi$ have infinite length, then $f$ is a global diffeomorphism.
\end{proposition}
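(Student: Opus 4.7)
The plan is to construct $f$ explicitly from the given data and then transport the metric of $\E$ to $\Omega\times\R$ by pullback. For part (a), I note that $f$ coincides with the map from Proposition~\ref{prop:existencia-seccion} composed on the base with the chart $\varphi^{-1}$; the intertwining of $\pi$ and $\pi_1$ follows immediately from $\pi\circ F_0=\id_U$ and $\pi\circ\phi_t=\pi$. To see that $f$ is a local diffeomorphism I would check that $df$ is injective pointwise: $df(\p_t)=\xi$ is vertical and unit, while $df(\p_x)$ and $df(\p_y)$ project under $d\pi$ onto $(\varphi^{-1})_*\p_x$ and $(\varphi^{-1})_*\p_y$, which are linearly independent, so the three vectors form a basis of the tangent space.

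For part (b), I would decompose
\[df(\p_x)=\overline{\p_x}+\alpha\,\xi,\qquad df(\p_y)=\overline{\p_y}+\beta\,\xi,\]
where $\overline{\p_x}$ and $\overline{\p_y}$ denote the horizontal lifts of $(\varphi^{-1})_*\p_x$ and $(\varphi^{-1})_*\p_y$ and $\alpha,\beta\in\mathcal{C}^\infty(\Omega\times\R)$ are to be determined. Since $\phi_t$ is an isometry of $\E$ that preserves both $\xi$ and the horizontal distribution, applying $d\phi_t$ to the decomposition at $(x,y,0)$ produces the decomposition at $(x,y,t)$ with the same coefficients, so $\alpha$ and $\beta$ depend only on $(x,y)$. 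Conformality of $\varphi$ yields $|\overline{\p_x}|^2=|\overline{\p_y}|^2=\lambda^2$ and $\langle\overline{\p_x},\overline{\p_y}\rangle=0$; expanding $f^*\langle\cdot,\cdot\rangle$ in the coframe $\{\df x,\df y,\df t\}$ and matching it with the model expression $\lambda^2(\df x^2+\df y^2)+(\df t-\lambda(a\df x+b\df y))^2$ uniquely identifies $a=-\alpha/\lambda$ and $b=-\beta/\lambda$.

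Part (c) is then essentially bookkeeping: since $\lambda,a,b$ are independent of $t$, the Lie derivative $\mathcal{L}_{\p_t}\df s^2$ vanishes, so $\p_t$ is a unit Killing field vertical for $\pi_1$, and $(f,\varphi^{-1})$ is a local isomorphism of Killing submersions by construction. For the final assertion, if the fibers have infinite length then $\{\phi_t\}_{t\in\R}$ acts freely, so $f(x,y,t)=f(x',y',t')$ forces $(x,y)=(x',y')$ after projecting by $\pi$ and then $t=t'$, since otherwise $\phi_{t-t'}$ would have a fixed point on a fiber of infinite length.

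The step I expect to be the main obstacle is the $t$-independence of $\alpha$ and $\beta$ in (b). It rests on the equivariance of the horizontal-vertical splitting under vertical translations, which in turn follows from $\phi_t$ being an isometry preserving $\xi$ and hence its orthogonal complement; once this is made precise, the remainder reduces to routine matching of coefficients against the conformal factor $\lambda$.
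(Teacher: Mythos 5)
Your proposal is correct and follows essentially the same route as the paper: construct $f$ from the section and the flow $\{\phi_t\}$ (Proposition~\ref{prop:existencia-seccion}), pull the metric back, and use invariance under vertical translations to see that the coefficients $a,b$ are $t$-independent. The only cosmetic difference is that you decompose $df(\p_x),df(\p_y)$ into horizontal and vertical parts in $\E$ and match coefficients, whereas the paper equivalently writes the horizontal lifts $E_1,E_2$ of the conformal frame in the model $\Omega\times\R$ and observes that orthonormality of $\{E_1,E_2,\p_t\}$ is equivalent to the stated form of $\df s^2$.
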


\begin{proof}
We deduce from Proposition~\ref{prop:existencia-seccion} that $\Psi:U\times\R\to\pi^{-1}(U)$ given by $\Psi(p,t)=\phi_t(F_0(p))$ is a local diffeomorphism, so $f=\Psi\circ(\varphi^{-1}\times\id_\R)$ is also a local diffeomorphism, and it obviously satisfies the condition $\varphi\circ\pi_1=\pi\circ f$, so item (a) is proved. Note that Proposition~\ref{prop:existencia-seccion} also ensures that $f$ is a global diffeomorphism if the fibers of $\pi$ have infinite length.

To prove item (b), consider the unique Riemannian metric $\df s^2$ in $\varphi(U)\times\R$ making $f$ a local isometry. The condition $\varphi\circ\pi_1=\pi\circ f$ implies that $\pi_1$ is a Killing submersion. Vertical translations for $\pi$ correspond (through $f$) to isometries of the form $(x,y,t)\to(x,y,t+\mu)$, $\mu\in\R$, in $(\varphi(U)\times\R,\df s^2)$. In particular, $\p_t$ is a unit vertical Killing vector field in $(\varphi(U)\times\R,\df s^2)$.

Let $\{e_1,e_2\}$ be the orthonormal frame in $(\varphi(U),\df s_\lambda^2)$, where $e_1=\frac{1}{\lambda}\p_x$ and $e_2=\frac{1}{\lambda}\p_y$, and let $\{E_1,E_2\}$ be the horizontal lift of $\{e_1,e_2\}$ with respect to $\pi_1$ and $E_3=\partial_t$. Since $\pi_1$ is the projection over the first two variables, there exist $a,b\in\mathcal{C}^\infty(\varphi(U))$ such that 
\begin{equation}\label{eqn:base-universal-killing}
\left\{\begin{array}{l}
(E_1)_{(x,y,t)}=\tfrac{1}{\lambda(x,y)}\partial_x+a(x,y)\partial_t,\\
(E_2)_{(x,y,t)}=\tfrac{1}{\lambda(x,y)}\partial_y+b(x,y)\partial_t,\\
(E_3)_{(x,y,t)}=\partial_t.
\end{array}\right.
\end{equation}
Note that $\{E_1,E_2,E_3\}$ is an orthonormal frame in $(\varphi(U)\times\R,\df s^2)$ which can be supposed positively oriented after possibly swapping $e_1$ and $e_2$. Now it is straightforward to show that the global frame~\eqref{eqn:base-universal-killing} is orthonormal for $\df s^2$ if and only if $\df s^2$ is the metric given by~\eqref{eqn:metrica-general-killing}.
\end{proof}

Regardless of the values of the functions $a,b\in\mathcal{C}^\infty(\Omega)$, the Riemannian metric given by equation~\eqref{eqn:metrica-general-killing} satisfies that the projection over the first two variables is a Killing submersion over $(\Omega,\df s_\lambda^2)$.

\begin{definition}[canonical example]\label{defi:canonical}
Given an open set $\Omega\subset\R^2$ and $\lambda,a,b\in\mathcal{C}^\infty(\Omega)$ with $\lambda>0$, the Killing submersion
\[\pi_1:(\Omega\times\R,\df s^2_{\lambda,a,b})\to (\Omega,\df s_\lambda^2),\quad\pi_1(x,y,z)=(x,y),\]
\begin{equation*}
\df s^2_{\lambda,a,b}=\lambda^2(\df x^2+\df y^2)+(\df z-\lambda(a\df x+b\df y))^2.
\end{equation*}
will be called the canonical example associated to $(\lambda,a,b)$.
\end{definition}

Equation~\eqref{eqn:base-universal-killing} defines a global orthonormal frame $\{E_1,E_2,E_3\}$ for $\df s^2_{\lambda,a,b}$, where $E_1$ and $E_2$ are horizontal, and $E_3$ is a unit vertical Killing field. It is easy to check that $[E_1,E_3]=[E_2,E_3]=0$ and
\[[E_1,E_2]=\tfrac{\lambda_y}{\lambda^2}E_1-\tfrac{\lambda_x}{\lambda^2}E_2+\left(\tfrac{1}{\lambda^2}(b\lambda_x-a\lambda_y)+\tfrac{1}{\lambda}(b_x-a_y)\right)E_3.\]
Taking into account Lemma~\ref{lema:curvaturafibrado}, we can compute the bundle curvature $\tau$ associated to this canonical example as
\begin{equation}\label{eqn:tau-divergencia-killing}
\begin{split}
2\tau&=\langle\overline\nabla_{E_1}{E_2},E_3\rangle-\langle\overline\nabla_{E_2}{E_1},E_3\rangle=\langle[E_1,E_2],E_3\rangle\\
&=\tfrac{1}{\lambda^2}(b\lambda_x-a\lambda_y)+\tfrac{1}{\lambda}(b_x-a_y)=\tfrac{1}{\lambda^2}\left((\lambda b)_x-(\lambda a)_y\right).
\end{split}
\end{equation}
This divergence formula will come in handy in the sequel.

\begin{lemma}[Classification of canonical examples]\label{lema:clasificacion-local-killing}
 Let $\Omega\subseteq\R^2$ be a simply-connected open set and $\lambda,a_0,a_1,b_0,b_1\in\mathcal{C}^\infty(\Omega)$ such that $\lambda>0$. The following assertions are equivalent:
\begin{itemize}
 \item[i)] There exists $d\in\mathcal{C}^\infty(\Omega)$ such that the pair $(f_d,\id_\Omega)$, where 
\begin{equation}\label{eqn:killing-isomorfismo-local}
\begin{array}{rcl}
f_d:(\Omega\times\R,\df s_{\lambda,a_0,b_0}^2)&\longrightarrow&(\Omega\times\R,\df s_{\lambda,a_1,b_1}^2)\\(x,y,z)&\longmapsto&(x,y,z-d(x,y)),
\end{array}
\end{equation}
is an isomormphism of Killing submersions.
 \item[ii)] There exists $d\in\mathcal{C}^\infty(\Omega)$ such that $d_x=\lambda(a_1-a_0)$ and $d_y=\lambda(b_1-b_0)$.
\item[iii)] The bundle curvatures $\tau_0,\tau_1\in\mathcal{C}^\infty(\Omega)$ of the two submersions coincide.
\end{itemize}
\end{lemma}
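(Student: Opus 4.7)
The plan is to establish the cycle (i)$\Leftrightarrow$(ii) by a direct pullback computation of metrics, and then (ii)$\Leftrightarrow$(iii) by invoking Poincaré's lemma on the simply-connected set $\Omega$ together with the divergence formula~\eqref{eqn:tau-divergencia-killing} for the bundle curvature. Conceptually, the bundle curvature is the obstruction that decides whether one can gauge away the ``connection'' 1-form $a\df x+b\df y$ by a vertical reparametrization.

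For (i)$\Leftrightarrow$(ii), I note first that the identity $\pi_1\circ f_d=\id_\Omega\circ\pi_1$ is automatic, so $(f_d,\id_\Omega)$ being an isomorphism of Killing submersions reduces to $f_d$ being an isometry, i.e.\ $f_d^*\df s^2_{\lambda,a_1,b_1}=\df s^2_{\lambda,a_0,b_0}$. Writing the target vertical coordinate as $Z=z-d(x,y)$, so that $\df Z=\df z-d_x\df x-d_y\df y$, one computes
\[
f_d^*\df s^2_{\lambda,a_1,b_1}=\lambda^2(\df x^2+\df y^2)+\bigl(\df z-(\lambda a_1+d_x)\df x-(\lambda b_1+d_y)\df y\bigr)^2.
\]
Comparing with $\df s^2_{\lambda,a_0,b_0}$, the equality holds if and only if $d_x=\lambda(a_0-a_1)$ and $d_y=\lambda(b_0-b_1)$, which after replacing $d$ by $-d$ is exactly the condition in (ii). Hence (i)$\Leftrightarrow$(ii).

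For (ii)$\Leftrightarrow$(iii), observe that the existence of $d\in\mathcal{C}^\infty(\Omega)$ with prescribed partial derivatives $d_x=\lambda(a_1-a_0)$ and $d_y=\lambda(b_1-b_0)$ is, on the simply-connected open set $\Omega$, equivalent by Poincaré's lemma to the closedness of the 1-form $\omega:=\lambda(a_1-a_0)\df x+\lambda(b_1-b_0)\df y$. Closedness reads $\bigl(\lambda(b_1-b_0)\bigr)_x=\bigl(\lambda(a_1-a_0)\bigr)_y$, i.e.
\[
(\lambda b_1)_x-(\lambda a_1)_y=(\lambda b_0)_x-(\lambda a_0)_y.
\]
By formula~\eqref{eqn:tau-divergencia-killing}, each side equals $2\lambda^2\tau_i$, so the condition is equivalent to $\tau_0=\tau_1$ (since $\lambda>0$). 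This closes the equivalence (ii)$\Leftrightarrow$(iii), and combined with the previous paragraph completes the proof. There is no serious obstacle here; the whole argument is essentially a restatement of the fact that $\tau$, via~\eqref{eqn:tau-divergencia-killing}, is the de Rham obstruction to the existence of the gauge function $d$.
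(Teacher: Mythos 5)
Your proposal is correct and follows essentially the same route as the paper: (i)$\Leftrightarrow$(ii) by the direct pullback/isometry computation, and the link with (iii) via the divergence formula~\eqref{eqn:tau-divergencia-killing} together with Poincar\'e's lemma on the simply-connected $\Omega$ (the paper gets (i)$\Rightarrow$(iii) by citing Remark~\ref{rmk:tau} instead of recomputing, a negligible difference). The sign discrepancy you absorb by replacing $d$ with $-d$ is harmless, since both (i) and (ii) are existence statements in $d$.
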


\begin{proof}
It is easy to check that $f_d$ is an isometry if and only if $d$ satisfies (ii) so the equivalence between (i) and (ii) is proved. Since $f_d$ preserves the orientation in $\Omega\times\R$,  we get that (i) implies (iii) from Remark~\ref{rmk:tau}. Finally, to prove that (iii) implies (ii), observe that $\tau_0=\tau_1$ means $(\lambda b_0)_x-(\lambda a_0)_y=(\lambda b_1)_x-(\lambda a_1)_y$ in view of~\eqref{eqn:tau-divergencia-killing}. Equivalently, we have $(\lambda(a_1-a_0))_y=(\lambda(b_1-b_0))_x$, so item (ii) follows from Poincaré's Lemma and the fact that $\Omega$ is simply connected.
\end{proof}

We remark that condition (i) in the statement is equivalent to the fact that the canonical examples for $(\lambda,a_0,b_0)$ and $(\lambda,a_1,b_1)$ represent the same Killing submersion for different initial sections. The function $d$ is, up to an additive constant, the vertical distance between such sections.

Lemma~\ref{lema:clasificacion-local-killing} is actually a local classification result for Killing submersions, since we have proved that all Killing submersions are locally equivalent to canonical examples. We will now give the general version.

\begin{theorem}[Uniqueness]\label{thm:unicidad}
For $i\in\{0,1\}$, let $\pi_i:\E_i\to M_i$ be a Killing submersion, $M_i$ being simply-connected, with bundle curvature $\tau_i\in\mathcal{C}^\infty(M_i)$ for a given orientation in $\E_i$. Suppose that the fibers of $\pi_0$ and the fibers of $\pi_1$ have the same lenght and there exists an isometry $h:M_0\to M_1$. Let $p_0\in\E_0$ and $p_1\in\E_1$ be such that $h(\pi_0(p_0))=\pi_1(p_1)$. 
\begin{itemize}
 \item[a)] If $\tau_1\circ h=\tau_0$, then there exists a unique orientation-preserving isometry $f:\E_0\to\E_1$ such that $\pi_1\circ f=h\circ\pi_0$ and $f(p_0)=p_1$.
 \item[b)] If $\tau_1\circ h=-\tau_0$, then there exists a unique orientation-reversing isometry $f:\E_0\to\E_1$ such that $\pi_1\circ f=h\circ\pi_0$ and $f(p_0)=p_1$.
\end{itemize} 
\end{theorem}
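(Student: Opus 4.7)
The two statements reduce to one: by Lemma~\ref{lema:curvaturafibrado}, the bundle curvature changes sign when the orientation of the total space is reversed, so reversing the orientation of $\E_1$ turns case (b) into case (a) and turns the sought orientation-reversing isometry into an orientation-preserving one. Using $h$ to identify $M_0$ with $M_1$, I may furthermore assume $M_0=M_1=M$, $h=\id_M$, and $\tau_0=\tau_1=\tau$; the task is then to construct a unique orientation-preserving Killing isometry $f:\E_0\to\E_1$ over $\id_M$ with $f(p_0)=p_1$.

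Uniqueness is a standard rigidity argument. If $f,f'$ both work, then $g=(f')^{-1}\circ f$ is an orientation-preserving isometry of $\E_0$ lying over $\id_M$ and fixing $p_0$. Since $\pi_0\circ g=\pi_0$, differentiating at $p_0$ gives $d\pi_0\circ dg_{p_0}=d\pi_0$; because $d\pi_0$ is an isomorphism on the horizontal distribution and $dg_{p_0}$ preserves that distribution, $dg_{p_0}$ is the identity on horizontal vectors. Orientation-preservation then forces $dg_{p_0}(\xi_{p_0})=\xi_{p_0}$, so $dg_{p_0}=\id$, and the usual rigidity of isometries yields $g=\id_{\E_0}$.

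For existence I would build $f$ from local data and then glue. On any simply-connected noncompact conformal chart $(U,\varphi)$, Proposition~\ref{prop:trivializacion} represents both $\pi_i^{-1}(U)$ as canonical examples $(\Omega\times\R,\df s^2_{\lambda,a_i,b_i})$ sharing the conformal factor $\lambda$ and the bundle curvature $\tau|_U$. Lemma~\ref{lema:clasificacion-local-killing} then provides a local Killing-submersion isomorphism between them, unique up to composition with a vertical translation. The hypothesis that $\pi_0$ and $\pi_1$ have fibers of the same length ensures that this isomorphism on the $\R$-cover descends correctly when the actual fibers of $\E_0$ and $\E_1$ are circles. If $M$ is noncompact, simple-connectedness makes it a topological disk; a single global isothermal chart then suffices, and normalizing $f(p_0)=p_1$ fixes the translation and produces the global $f$.

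The substantial case is $M\cong\s^2$, and this is the main obstacle. I would cover $M$ by two conformal disk charts $U_N,U_S$ chosen to both contain $\pi_0(p_0)$ and have connected overlap, and construct by the previous paragraph local isomorphisms $f_N,f_S$ normalized so that $f_N(p_0)=f_S(p_0)=p_1$. On the overlap, the composition $\psi=f_S^{-1}\circ f_N$ is a Killing-submersion automorphism of $\pi_0^{-1}(U_N\cap U_S)$ over the identity that fixes $p_0$. Working in simply-connected subcharts of the overlap, Lemma~\ref{lema:clasificacion-local-killing} identifies $\psi$ with a vertical translation by a locally-constant function; connectedness of $U_N\cap U_S$ promotes this to a single constant, and the condition $\psi(p_0)=p_0$ forces that constant to vanish. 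Hence $f_N$ and $f_S$ agree on the overlap and assemble into the desired global $f$.
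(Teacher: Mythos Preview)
Your proposal is correct and follows essentially the same route as the paper: reduce to canonical examples on disk charts via Proposition~\ref{prop:trivializacion}, match them using Lemma~\ref{lema:clasificacion-local-killing}, and for the sphere case cover by two punctured-sphere charts and show the two local isometries agree on the connected overlap. The paper normalizes the second chart isometry at an auxiliary point $\tilde p_0$ and invokes rigidity of isometries (same value and same differential at a point) to conclude agreement, whereas you normalize both at $p_0$ and argue via Lemma~\ref{lema:clasificacion-local-killing} that $\psi=f_S^{-1}\circ f_N$ is a vertical translation by a locally constant amount; note that you could shortcut this last step by simply reusing your own uniqueness argument, since $\psi$ is an orientation-preserving isometry over $\id$ fixing $p_0$ on a connected set.
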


\begin{proof}
Let us first consider the case of $M_i$ being a topological disk, so there exist conformal diffeomorphisms $\varphi_i: M_i\to\Omega$ such that $h\circ\varphi_0=\varphi_1$, where $\Omega\subset\R^2$ is an open set. For $i\in\{0,1\}$, Proposition~\ref{prop:existencia-seccion} guarantees the existence of a global smooth section $F_i:M_i\to\E_i$ and a local diffeomorphism $f_i:\Omega\times\R\to\E_i$, given by $f_i(x,y,t)=\phi^i_t(F_i(\varphi_i^{-1}(x,y)))$ as in Proposition~\ref{prop:trivializacion}, where $\{\phi^i_t\}$ is the $1$-parameter group of vertical translations associated to $\pi_i$. In other words, we obtain a commutative diagram as in Figure~\ref{fig:killing-isometrias}, where $\pi:\Omega\times\R\to\Omega$ is the projection over the first factor.

\begin{figure}
\[
\xymatrix{\E_0\ar^{\pi_0}[rrrr]\ar@{..>}_{f}[ddd]&&&&M_0\ar^{h}[ddd]\ar_{\varphi_0}[ld]\\
&\Omega\times\R\ar_{f_0}[lu]\ar^{\pi}[rr]\ar@{..>}^{\widehat f}[d]&&\Omega\ar@{=}[d]&\\
&\Omega\times\R\ar^{f_1}[ld]\ar^{\pi}[rr]&&\Omega&\\
\E_1\ar_{\pi_1}[rrrr]&&&&M_1\ar^{\varphi_1}[lu]
}
\]
\caption{Horizontal and vertical arrows represent Killing submersions and isometries, respectively. Diagonal ones relate the original diagram with the canonical examples.}\label{fig:killing-isometrias}
\end{figure}
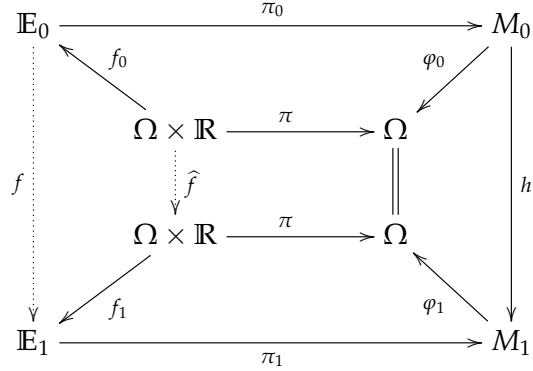

Observe that $\varphi_0$ and $\varphi_1$ induce the same metric $\lambda^2(\df x^2+\df y^2)$ on $\Omega$, and $f_0$ and $f_1$ induce canonical metrics $\df s^2_{\lambda,a_0,b_0}$ and $\df s^2_{\lambda,a_1,b_1}$ on $\Omega\times\R$, respectively. Moreover, the condition $\tau_1\circ h=\tau_0$ ensures that both canonical examples for $(\lambda,b_0,a_0)$ and $(\lambda,b_1,a_1)$ have the same bundle curvature so Lemma~\ref{lema:clasificacion-local-killing} yields the existence of a isometry 
\[\wh f:(\Omega\times\R,\df s^2_{\lambda,a_0,b_0})\to(\Omega\times\R,\df s^2_{\lambda,a_1,b_1})\]
of the form $\wh f(x,y,z)=f(x,y,z+d(x,y))$ for some $d\in\mathcal{C}^\infty(\Omega)$, so $\pi\circ\wh f=\pi$. If $\tau_1\circ h=-\tau_0$, the canonical examples for $(\lambda,b_0,a_0)$ and $(\lambda,b_1,a_1)$ have opposite bundle curvatures, so it is easy to see that there exists an isometry of the form $\wh{f}(x,y,z)=f(x,y,-z-d(x,y))$ for some $d\in\mathcal{C}^\infty(\Omega)$.

In both cases, the isometry $\wh{f}$ induces an isometry from the quotient of $(\Omega\times\R,\df s^2_{\lambda,a_0,b_0})$ by a vertical translation to the quotient of  $(\Omega\times\R,\df s^2_{\lambda,a_1,b_1})$ by the same vertical translation. Adjusting the translation so that the length of the fibers of the quotient is the same as in $\E_0$ or $\E_1$, the isometry in the quotient provides an isometry $f:\E_0\to\E_1$ such that $\pi_1\circ f=h\circ\pi_0$. We get $f(p_0)=p_1$ by just composing $f$ with a vertical translation.

Finally, suppose that $M_i$ are topological $2$-spheres. Let $U_0=M_0\sm\{q_0\}$ for some $q_0\neq\pi_0(p_0)$ and $U_1=h(U_0)=M_1\sm\{h(q_0)\}$. Note that $h:U_0\to U_1$ is an isometry in the conditions of the disk-case so it lifts to an isometry $f:V_0\to V_1$, where $V_i=\pi_i^{-1}(U_i)$, $i\in\{0,1\}$, satisfying $\pi_1\circ f=h\circ\pi_0$ in $V_0$ and $f(p_0)=p_1$. Now, let $\wt{p}_0\in V_0$ be such that $\pi_0(\wt p_0)\neq\pi_0(p_0)$, and $\wt p_1=f(\wt p_0)$. Let us take $\wt q_0\in M_0$ such that $\wt q_0\not\in\{\pi_0(\wt p_0),q_0\}$, and $\wt U_0=M_0\sm\{\wt q_0\}$, $\wt U_1=h(\wt U_0)=M_1\sm\{h(\wt q_0)\}$. The same reasoning above gives an isometry $\wt f:\wt V_0\to \wt V_1$, where $\wt V_i=\pi_i^{-1}(\wt U_i)$, $i\in\{1,2\}$, satisfying the condition $\pi_1\circ\wt f=h\circ\pi_0$ in $\wt V_0$ and $\wt f(\wt p_0)=\wt p_1=f(\wt p_0)$. 

Since $V=V_0\cap\wt V_0$ is connected, $f(\wt p_0)=\wt f(\wt p_0)$, and $(\df f)_{\wt p_0}=(\df\wt f)_{\wt p_0}$ (this is because both $f$ and $\wt f$ preserve the vertical direction and $\pi_1\circ\wt f=h\circ\pi_0=\pi_1\circ f$ in $V$), we conclude that $f=\wt f$ in $V$. As $V_0\cup\wt V_0=\E_0$, we deduce that $f$ can be extended (by $\wt f$) to an isometry from $\E_0$ to $\E_1$, and it trivially satisfies the conditions in the statement.
\end{proof}

\subsection{Killing isometries}
We will now particularize some of the results in the previous section to study isometries of the total space of a Killing submersion $\pi:\E\to M$ preserving the Killing submersion structure, i.e., those preserving the direction of a unit vertical Killing vector field $\xi$.

\begin{definition}
In the previous notation, the isometries of $\E$ satisfying $f_*\xi=\xi$ or $f_*\xi=-\xi$ will be called Killing isometries. 
\end{definition}

The definition does not depend on the choice of $\xi$. If $f_*\xi=\xi$ (resp. $f_*\xi=-\xi$), then $f$ is said to preserve (resp. reverse) the orientation of the fibers. Note that preserving the orientation of the fibers is not related to preserving or reversing the orientation of the total space $\E$.

%

\begin{lemma}\label{lema:killing-isometrias-proyeccion}
Let $\pi:\E\to M$ be a Killing submersion with bundle curvature $\tau\in \mathcal{C}^\infty(M)$, and let $f:\E\to\E$ be a Killing isometry. Then:
\begin{itemize}
 \item[a)] There exists a unique isometry $h:M\to M$ such that $\pi\circ f=h\circ\pi$.
 \item[b)] If $f$ preserves the orientation in $\E$, then $\tau\circ h=\tau$; 
 \item[c)] If $f$ reverses the orientation in $\E$, then $\tau\circ h=-\tau$.
\end{itemize}
\end{lemma}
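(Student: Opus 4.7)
The plan is to first construct the quotient isometry $h$ in (a) by using that $f$ preserves fibers, and then obtain (b) and (c) by directly invoking Remark~\ref{rmk:tau}(3), which already recorded the invariance of the bundle curvature under isomorphisms of Killing submersions.

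For part (a), I would start from the observation that the Killing condition $f_*\xi=\pm\xi$ forces $f$ to map every maximal integral curve of $\xi$—hence every fiber of $\pi$—to another fiber. This makes $h:M\to M$, defined by $h(\pi(p))=\pi(f(p))$, well-posed. Smoothness of $h$ follows by choosing, around any point of $M$, a local smooth section of $\pi$ (which exists by the submersion theorem, or by Proposition~\ref{prop:existencia-seccion} applied to a simply-connected neighborhood) and expressing $h$ as $\pi\circ f$ composed with it. Bijectivity is inherited from $f$. To see $h$ is an isometry, use that $f$ preserves the vertical distribution and, being an isometry, also its orthogonal complement, so that the horizontal lift $\widetilde v$ of any $v\in T_pM$ satisfies that $df(\widetilde v)$ is horizontal at $f(p)$; then
\[
\|dh(v)\|=\|d\pi(df\widetilde v)\|=\|df\widetilde v\|=\|\widetilde v\|=\|v\|,
\]
where the middle equalities use that $\pi$ is a Riemannian submersion and $f$ is an isometry. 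Uniqueness of $h$ is immediate from surjectivity of $\pi$.

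With (a) proved, the pair $(f,h)$ is an isomorphism of Killing submersions from $\pi$ to itself in the sense of Definition~\ref{defi:isomorphism}, so Remark~\ref{rmk:tau}(3) applies and yields $\tau\circ f=\tau$ when $f$ preserves the orientation of $\E$, and $\tau\circ f=-\tau$ when it reverses it. Since $\tau$ is constant along fibers of $\pi$, viewing it as a function on $M$ amounts to the identity $\tau=\tau\circ\pi$ on $\E$; combined with $\pi\circ f=h\circ\pi$ this gives $\tau\circ h\circ\pi=\pm\tau\circ\pi$ on $\E$, and surjectivity of $\pi$ lets us descend to $\tau\circ h=\pm\tau$ on $M$, which is exactly (b) and (c).

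The main obstacle is really part (a): setting up $h$ cleanly and checking it is a smooth isometry. For readers who would rather not appeal to Remark~\ref{rmk:tau}(3), the invariance of $\tau$ also falls out of a direct computation by applying $df$ to the identity $\overline\nabla_X\xi=\tau\, X\wedge\xi$, using that $df(X\wedge Y)=\varepsilon\, df(X)\wedge df(Y)$ with $\varepsilon=\pm 1$ according to whether $f$ preserves or reverses the orientation of $\E$, and noting that the sign $\eta\in\{\pm 1\}$ encoded in $f_*\xi=\eta\xi$ appears once on each side and cancels. The careful bookkeeping shows that the sign which survives depends only on orientation-preservation in $\E$, not on whether $f$ preserves or reverses the orientation of the fibers, which is exactly the content of (b)–(c).
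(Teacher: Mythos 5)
Your proof is correct and follows essentially the same route as the paper: part (a) is obtained from the fact that $f$ maps fibers to fibers together with $\df\pi$ being an isometry on the horizontal distribution, and parts (b)--(c) are reduced to Remark~\ref{rmk:tau} by observing that $(f,h)$ is an isomorphism of Killing submersions in the sense of Definition~\ref{defi:isomorphism}; you simply spell out the details the paper leaves implicit. Your closing direct computation of $\tau\circ h=\pm\tau$ is just an unwinding of that remark and adds nothing that conflicts with the paper's argument.
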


\begin{proof}
Item (a) follows from the fact that $f$ maps fibers to fibers and from the fact that $\df\pi$ is an isometry when restricted to the horizontal distribution. Now, it is easy to see that $(f,h)$ is an isomorphism of Killing submersions 
(see Definition~\ref{defi:isomorphism}) so items (b) and (c) follow from Remark~\ref{rmk:tau}.
\end{proof}

In fact, the map $f\mapsto h$ defined by item (a) of Lemma~\ref{lema:killing-isometrias-proyeccion} can be easily proved to be a group morphism from the group of Killing isometries to the group of isometries of $M$ with $\tau\circ h=\pm \tau$. Moreover, the normal subgroup of orientation-preserving isometries is mapped to those isometries of $M$ which preserve $\tau$. As an application of Theorem~\ref{thm:unicidad}, we can prove that this morphism is surjective and its kernel consists of the vertical translations and,  for $\tau\equiv 0$, also the symmetries with respect to a slice.

\begin{corollary}\label{coro:killing-isometrias}
Let $\pi:\E\to M$ be a Killing submersion with bundle curvature $\tau\in \mathcal{C}^\infty(M)$ and suppose that $M$ is simply-connected. Let $h:M\to M$ be an isometry and take $p_0,q_0\in\E$ such that $h(\pi(p_0))=\pi(q_0)$.
\begin{itemize}
 \item[a)] If $\tau\circ h=\tau$ in $M$, then there exists a unique orientation-preserving Killing isometry $f:\E\to\E$ such that $\pi\circ f=h\circ\pi$ and $f(p_0)=q_0$.
 \item[b)] If $\tau\circ h=-\tau$ in $M$, then there exists a unique orientation-reversing Killing isometry $f:\E\to\E$ such that $\pi\circ f=h\circ\pi$ and $f(p_0)=q_0$.
\end{itemize}
\end{corollary}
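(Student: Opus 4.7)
The plan is to derive this corollary as a direct consequence of the uniqueness Theorem~\ref{thm:unicidad} applied with both Killing submersions equal to $\pi$. Set $\pi_0=\pi_1=\pi$, $\E_0=\E_1=\E$, $M_0=M_1=M$, and $\tau_0=\tau_1=\tau$. The hypothesis that the fibers of $\pi_0$ and $\pi_1$ share the same length is trivially satisfied, $M$ is simply-connected by assumption, and the isometry $h:M\to M$ is given. Thus Theorem~\ref{thm:unicidad}(a) directly yields, in case (a), a unique orientation-preserving isometry $f:\E\to\E$ with $\pi\circ f=h\circ\pi$ and $f(p_0)=q_0$; analogously, Theorem~\ref{thm:unicidad}(b) gives the orientation-reversing isometry in case (b).

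The only remaining task is to verify that the isometry $f$ produced by Theorem~\ref{thm:unicidad} is actually a \emph{Killing} isometry, that is, that $f_*\xi=\pm\xi$ for a unit vertical Killing field $\xi$ on $\E$. First, the relation $\pi\circ f=h\circ\pi$ implies that $f$ maps fibers of $\pi$ to fibers of $\pi$, so $f_*\xi$ is vertical. Since $f$ is an isometry and $\xi$ is unitary, $f_*\xi$ is a unit vertical vector field on $\E$, and the only such fields are $\xi$ and $-\xi$. Hence $f$ is a Killing isometry by definition. The sign is globally determined by connectedness of $\E$, so it is the same at every point.

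Uniqueness of $f$ is inherited directly from Theorem~\ref{thm:unicidad}: any two orientation-preserving (resp. orientation-reversing) Killing isometries projecting to $h$ and taking $p_0$ to $q_0$ are, in particular, orientation-preserving (resp. orientation-reversing) isometries of $\E$ with this property, and therefore must coincide.

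I do not foresee a genuine obstacle here: the content has already been concentrated into Theorem~\ref{thm:unicidad}, and the corollary only needs the mild observation that, for an isometry $f$ satisfying $\pi\circ f=h\circ\pi$, the condition $f_*\xi=\pm\xi$ comes for free. The slight conceptual point to be careful about is the relation between the two possible orientation-type dichotomies ($\E$-orientation vs. fiber-orientation), but since we are only verifying membership in the class of Killing isometries (not a matching between the two dichotomies), this does not affect the argument.
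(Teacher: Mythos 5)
Your proposal is correct and matches the paper's intent exactly: the paper offers no separate proof of Corollary~\ref{coro:killing-isometrias}, presenting it as a direct application of Theorem~\ref{thm:unicidad} with $\pi_0=\pi_1=\pi$, just as you do. Your extra observation that any isometry $f$ with $\pi\circ f=h\circ\pi$ automatically satisfies $f_*\xi=\pm\xi$ (vertical, unit, constant sign by connectedness) is the small implicit step the paper leaves to the reader, and you handle it correctly.
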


As an immediate consequence, in the following two situations there do not exist Killing isometries reversing the orientation of the total space:
\begin{itemize}
 \item If the bundle curvature is a non-zero constant.
 \item If $M$ is a Riemannian $2$-sphere and $\int_{M}\tau\neq 0$.
\end{itemize}

\section{Curves in Killing submersions}\label{sec:curves}

\subsection{The horizontal lift of a curve}

\begin{definition}
Let $\pi:\E\to M$ be a Killing submersion and $\alpha:[c,d]\to M$ a $\mathcal{C}^1$-curve. A horizontal (or legendrian) lift of $\alpha$ is a $\mathcal{C}^1$-curve $\wt\alpha:[c,d]\to\E$  such that $\wt\alpha'$ is always horizontal and $\pi\circ\wt\alpha=\alpha$ in $[c,d]$. 
\end{definition}

This concept extends to piecewise $\mathcal{C}^1$-curves $\alpha:[c,d]\to M$, i.e., such that there is a partition $c=t_0<t_1<\ldots<t_n=d$ so that $\alpha_{|[t_{i-1},t_i]}$ is $\mathcal{C}^1$ for all $i\in\{1,\ldots,n\}$. A horizontal lift of $\alpha$ is a continuous curve $\wt\alpha:[c,d]\to\E$ such that $\wt\alpha_{|[t_{i-1},t_i]}$ is a horziontal lift of $\alpha_{|[t_{i-1},t_i]}$ for all $i\in\{1,\ldots,n\}$.

\begin{lemma}
Let $\alpha:[c,d]\to M$  be a piecewise $\mathcal{C}^1$-curve. Given $p_0\in \E$ such that $\pi(p_0)=\alpha(c)$, there exists a unique horizontal lift $\wt\alpha$ of $\alpha$ such that $\wt\alpha(c)=p_0$.
\end{lemma}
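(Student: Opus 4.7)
The plan is to reduce the question to a first-order ODE in a local trivialization provided by Proposition~\ref{prop:trivializacion}, then patch local lifts together using compactness of $[c,d]$.

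First I would handle the piecewise case by induction on the partition: once the $\mathcal{C}^1$ case is settled, I lift $\alpha_{|[t_0,t_1]}$ starting at $p_0$, then lift $\alpha_{|[t_1,t_2]}$ starting at the endpoint of the previous lift, and so on. Continuity at the breakpoints is automatic, and uniqueness propagates. So it suffices to prove the statement assuming $\alpha$ is $\mathcal{C}^1$ on $[c,d]$.

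For the $\mathcal{C}^1$ case, pick any point $s\in[c,d]$ and choose a coordinate neighborhood $(U,\varphi)$ of $\alpha(s)$ in $M$ with $\varphi:U\to\Omega\subset\R^2$ a conformal diffeomorphism. By Proposition~\ref{prop:trivializacion}, there is a local diffeomorphism $f:\Omega\times\R\to\pi^{-1}(U)$ and a canonical metric $\df s^2_{\lambda,a,b}$ on $\Omega\times\R$ making $(f,\varphi^{-1})$ a local isomorphism of Killing submersions. In this model, the horizontal distribution is spanned by $E_1,E_2$ from~\eqref{eqn:base-universal-killing}, and a curve $\beta(s)=(x(s),y(s),t(s))$ in $\Omega\times\R$ projects to $(x(s),y(s))$. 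Writing $\beta'$ in the frame $\{E_1,E_2,E_3=\p_t\}$ using~\eqref{eqn:base-universal-killing}, horizontality (the vanishing of the $E_3$-component) translates into the single scalar linear equation
\[t'(s) \;=\; \lambda(x(s),y(s))\bigl(a(x(s),y(s))\,x'(s)+b(x(s),y(s))\,y'(s)\bigr).\]
This is an inhomogeneous linear ODE in $t$ with continuous coefficients, so for any initial value $t(s_0)$ it admits a unique solution defined on the entire sub-interval of $[c,d]$ where $\alpha$ remains in $U$. Transporting back via $f$ yields a unique horizontal lift on that sub-interval with any prescribed initial value in the fiber.

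To globalize, cover the compact image $\alpha([c,d])$ by finitely many such coordinate neighborhoods and choose a partition $c=s_0<s_1<\cdots<s_N=d$ fine enough that $\alpha([s_{i-1},s_i])$ lies entirely in one neighborhood $U_i$. Starting from $p_0$, the local result produces a horizontal lift $\wt\alpha_1$ on $[s_0,s_1]$ with $\wt\alpha_1(s_0)=p_0$; using $\wt\alpha_1(s_1)$ as initial condition on $U_2$, I obtain $\wt\alpha_2$ on $[s_1,s_2]$, and so on. Concatenation gives a continuous (in fact $\mathcal{C}^1$) horizontal lift $\wt\alpha$ with $\wt\alpha(c)=p_0$. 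Uniqueness follows from uniqueness of the local ODE solutions: any two horizontal lifts with the same initial value agree on a maximal subinterval containing $c$, and on each chart their difference in the vertical coordinate satisfies the same linear ODE with zero initial data, hence vanishes. The only step requiring any care is checking that the gluing is independent of the choice of overlapping charts, but this is immediate from the fact that horizontality and the initial value uniquely determine the lift in any trivialization.
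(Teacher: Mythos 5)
Your proposal is correct and follows essentially the same route as the paper: reduce to a single conformal chart via a sufficiently fine partition, use the canonical model from Proposition~\ref{prop:trivializacion} to rewrite horizontality as the scalar equation \eqref{eqn:caracterizacion-legendriano} for the vertical coordinate, and conclude existence and uniqueness from the ODE with prescribed initial value. The only cosmetic difference is that you phrase the patching as a separate compactness/concatenation step, whereas the paper folds it into refining the partition, which is the same argument.
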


\begin{proof}
Let $c=t_0<t_1<\ldots<t_n=d$ be a partition such that $\alpha_{|[t_{i-1},t_i]}$ is a $\mathcal{C}^1$-curve. We can refine the partition so that $\alpha([t_{i-1},t_i])\subset U_i$ for some conformal chart $(U_i,\varphi_i)$ of $M$ for all $i$. Thus, we can assume that $\alpha$ is contained in such a chart $(U,\varphi)$, so $\wt\alpha$ will be contained in $\pi^{-1}(U)$. 

This allows us to work in the canonical example in Definition~\ref{defi:canonical} for $\Omega=\varphi(U)$ and some $\lambda,a,b\in \mathcal{C}^\infty(\varphi(U))$, $\lambda>0$. Writing in coordinates $\alpha(t)=(x(t),y(t))\in\varphi(U)$, a horizontal lift of $\alpha$ must be of the form $\wt\alpha(t)=(x(t),y(t),z(t))$ for some $z:[c,d]\to\R$, and must satisfy $\langle\wt\alpha',\p_z\rangle=0$. This last condition can be developed as
\begin{equation}\label{eqn:caracterizacion-legendriano}
z'=\lambda(x,y)\cdot\left(a(x,y)x'+b(x,y)y'\right).
\end{equation}
Since $\pi(p_0)=\alpha(c)$, we have $p_0=(x(c),y(c),z_0)$ for some $z_0\in\R$. We deduce that there exists a unique $\mathcal{C}^1$-function $z(t)$ satisfying~\eqref{eqn:caracterizacion-legendriano} with initial condition $z(c)=z_0$, so the horizontal lift exists and is unique.
\end{proof}

We can now give a geometric meaning of the bundle curvature in terms of the difference of heights of the endpoints of the horizontal lift of closed curves (see also~\cite[Proposition 1.6.2]{DHM09}). Supposing that the fibers have infinite length will be necessary for the difference of heigths to make sense.

\begin{proposition}\label{prop:holonomia}
Let $\pi:\E\to M$ be a Killing  submersión whose fibers have infinite length. Given a simple piecewise $\mathcal{C}^1$-curve $\alpha:[c,d]\to M$ bounding an orientable relatively compact open set $G\subset M$, and a horizontal lift $\wt\alpha$ of $\alpha$, then
\[\left|\int_G\tau\right|=\frac{h}{2},\]
where $h$ is the length of the vertical segment joining $\wt\alpha(c)$ and $\wt\alpha(d)$.
\end{proposition}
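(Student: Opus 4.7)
The plan is to reduce the statement to a single coordinate chart where the submersion is represented by a canonical example (Definition~\ref{defi:canonical}) and then combine the divergence formula~\eqref{eqn:tau-divergencia-killing} with the ODE~\eqref{eqn:caracterizacion-legendriano} characterizing horizontal lifts, tying the two sides together via Green's theorem. To set this up, I would first choose an open $U\subset M$ containing $\bar G$ on which $\pi$ admits a global smooth section and which is conformally diffeomorphic to some $\Omega\subset\R^2$. The infinite-fiber hypothesis is exactly what makes this possible: if $M$ is noncompact, Proposition~\ref{prop:existencia-seccion} supplies the section directly, and if $M=\s^2$ (a case which forces $\int_M\tau=0$ when fibers are infinite) I would first remove a point outside $\bar G$ and then apply the same proposition. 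By Proposition~\ref{prop:trivializacion}, $\pi|_{\pi^{-1}(U)}$ becomes isomorphic to the canonical example $\pi_1\colon(\Omega\times\R,\df s_{\lambda,a,b}^2)\to(\Omega,\df s_\lambda^2)$ for some $\lambda,a,b$, and the isomorphism carries the unit vertical Killing field to $\p_t$, so vertical lengths in $\E$ correspond to differences of the $t$-coordinate.

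In this chart, the area form of $(\Omega,\df s_\lambda^2)$ is $\lambda^2\df x\df y$ and~\eqref{eqn:tau-divergencia-killing} gives $2\tau\lambda^2=(\lambda b)_x-(\lambda a)_y$. Applying Green's theorem to $\varphi(G)$ with its induced boundary orientation,
\[
2\int_G\tau \;=\; \int_{\varphi(G)}\bigl((\lambda b)_x-(\lambda a)_y\bigr)\df x\df y \;=\; \oint_{\varphi\circ\alpha}(\lambda a\df x+\lambda b\df y).
\]
On the other hand, writing the horizontal lift as $\wt\alpha(t)=(x(t),y(t),z(t))$, equation~\eqref{eqn:caracterizacion-legendriano} reads $z'=\lambda(ax'+by')$, whence
\[
z(d)-z(c)\;=\;\int_c^d\lambda(ax'+by')\df t\;=\;\oint_{\varphi\circ\alpha}(\lambda a\df x+\lambda b\df y).
\]
Comparing the two identities gives $z(d)-z(c)=2\int_G\tau$, and because $\p_t$ is unit, $|z(d)-z(c)|$ is exactly the length $h$ of the vertical segment joining $\wt\alpha(c)$ and $\wt\alpha(d)$, which yields $\bigl|\int_G\tau\bigr|=h/2$.

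The step I expect to require most care is the single-chart reduction when $M$ is neither $\s^2$ nor simply-connected: then $\bar G$ may fail to lie in a single conformal chart. I would handle this by covering $\bar G$ with finitely many charts of the type above, refining to a triangulation $\bar G=\bigcup_i\bar G_i$ subordinate to the cover, and summing the identity obtained above on each $G_i$. On the curvature side the pieces add up to $2\int_G\tau$ by additivity; on the height side, the uniqueness of horizontal lifts ensures that any interior edge traversed twice in opposite orientations contributes cancelling height increments, so the total height change reduces to that along $\p G=\alpha$. Once this patching is in place, the single-chart computation in the previous paragraph finishes the proof.
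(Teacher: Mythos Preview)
Your proposal is correct and follows essentially the same route as the paper: reduce to a canonical example, combine the divergence expression~\eqref{eqn:tau-divergencia-killing} for $2\tau$ with the ODE~\eqref{eqn:caracterizacion-legendriano} via Green's theorem (the paper phrases this as the divergence theorem with the outer conormal, which is the same computation), and then handle the case where $\bar G$ is not contained in a single chart by a triangulation subordinate to a conformal atlas, with interior edges cancelling. One small remark: the infinite-fiber hypothesis is not what produces the section (Proposition~\ref{prop:existencia-seccion} gives that for any noncompact base), but rather what makes the trivialization of Proposition~\ref{prop:trivializacion} a global diffeomorphism, so that the $t$-coordinate and hence the height $h=|z(d)-z(c)|$ are globally well defined; and your parenthetical about $\int_M\tau=0$ when $M=\s^2$ is true but not needed here.
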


\begin{proof}
Let us consider an atlas of $M$  consisting of conformal charts. We will first suppose that $\overline G$ is contained in one of the charts $(U,\varphi)$, so we can suppose that we are working in the canonical example given by Definition~\ref{defi:canonical} for $\lambda,a,b\in \mathcal{C}^\infty(\varphi(U))$, $\lambda>0$. Moreover, equation~\eqref{eqn:tau-divergencia-killing} allows us to write $\tau$ as a divergence in $\varphi(G)$. The divergence theorem yields
\[\int_{\varphi(G)}2\tau=\int_{\varphi(G)}\div\left(\tfrac{b}{\lambda}\partial_x-\tfrac{a}{\lambda}\p_y\right)=\int_{\partial\varphi(G)}\langle\tfrac{b}{\lambda}\partial_x-\tfrac{a}{\lambda}\p_y,\eta\rangle,\]
where $\eta$ is the outer unit conormal to $\varphi(G)$ along its boundary. We write in coordinates  $\alpha=(x,y)$ and $\wt\alpha=(x,y,z)$, and suppose $\alpha$ is parametrized by arc-length (i.e., $(x')^2+(y')^2=\frac{1}{\lambda^2}$). Hence $\eta=-y'\p_x+x'\p_y$, up to a sign, so we deduce from equation~\eqref{eqn:caracterizacion-legendriano} that
\[\left|\int_G 2\tau\right|=\left|\int_c^d\lambda\cdot (ax'+by')\right|=\left|\int_c^d z'\right|=|z(d)-z(c)|.\] 
As $h=|z(d)-z(c)|$ in this model, we are done.

If $\overline G$ is not contained in a single chart,  we can triangulate the open set $\overline G$ in a finite number of triangles with piecewise $\mathcal{C}^1$-boundaries in such a way each triangle is contained in a coordinate chart of the atlas (see the proof of~\cite[Theorem 2.3.A.1]{Jost} where a similar triangulation is constructed) and $\alpha$  can be expressed as a finite sum of the boundaries of these triangles. As $G$ is orientable, such boundaries can be oriented so that the interior ones cancel out in pairs. The argument above applied to each triangle together with the divergence theorem gives the desired result.
\end{proof}

\subsection{Geodesics}
Let $\pi:\E\to M$ be a Killing submersion. Given two vector fields $X,Y\in\X(M)$, we can consider their horizontal lifts $\overline X,\overline Y\in\X(\E)$. Then, the following equality holds (see~\cite[pp.185-187]{doC}):
\begin{equation}\label{conexion-levantamiento}
\overline\nabla_{\overline X}{\overline Y}=\overline{\nabla_XY}+[\overline X,\overline Y]^v,
\end{equation}
where $\nabla$ and $\overline\nabla$ is the Levi-Civita connections in $M$ and $\E$, respectively, $\overline{\nabla_XY}$ is the horizontal lift of $\nabla_XY$ and $[\overline X,\overline Y]^v$ is the vertical part of $[\overline X,\overline Y]$.

From~\eqref{conexion-levantamiento} we deduce that the horizontal lift of a geodesic in $M$ is a geodesic in $\E$. Since not all geodesics are horizontal or vertical, we will need a slight improvement of this argument to classify them all.

\begin{lemma}
All geodesics in $\E$ make a constant angle with a vertical Killing vector field $\xi$.
\end{lemma}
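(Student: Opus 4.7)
The plan is to prove the stronger statement that the inner product $\langle \gamma', \xi\rangle$ itself is constant along any geodesic $\gamma$; constancy of the angle then follows immediately because $|\gamma'|$ is constant (it is a geodesic) and $|\xi|=1$.

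First I would let $\gamma:I\to\E$ be a geodesic and compute
\[
\tfrac{d}{dt}\langle \gamma'(t),\xi_{\gamma(t)}\rangle
= \langle \overline\nabla_{\gamma'}\gamma',\xi\rangle
+ \langle \gamma',\overline\nabla_{\gamma'}\xi\rangle.
\]
The first term vanishes because $\gamma$ is a geodesic. For the second, I would invoke the Killing equation: the symmetric part of $\overline\nabla\xi$ vanishes, so
\[
\langle \overline\nabla_{\gamma'}\xi,\gamma'\rangle = 0.
\]
Alternatively, this follows directly from Lemma~\ref{lema:curvaturafibrado}: setting $X=\gamma'$ we get $\overline\nabla_{\gamma'}\xi = \tau\,\gamma'\wedge\xi$, which is orthogonal to $\gamma'$ by the properties of the cross product. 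Either way, $\langle\gamma',\xi\rangle$ is constant along $\gamma$.

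Since $|\gamma'|$ is constant along a geodesic and $|\xi|\equiv 1$, the cosine of the angle between $\gamma'$ and $\xi$ is constant, as desired. There is really no obstacle here; the lemma is just the standard ``Clairaut-type'' first integral associated to a Killing field applied to the specific vertical Killing field of the submersion, and the only choice in presentation is whether to quote the Killing identity $\langle \overline\nabla_X\xi,X\rangle=0$ directly or to derive it from the local formula $\overline\nabla_X\xi=\tau\,X\wedge\xi$ already available in the paper.
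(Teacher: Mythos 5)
Your proof is correct and follows essentially the same route as the paper: differentiate $\langle\gamma',\xi\rangle$ along the geodesic, kill the first term by the geodesic equation and the second via $\overline\nabla_{\gamma'}\xi=\tau\,\gamma'\wedge\xi$ (equivalently, the Killing identity), then conclude since $|\gamma'|$ and $|\xi|$ are constant. No issues.
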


\begin{proof}
Given a geodesic $\gamma$ in $\E$, we can compute
\[\tfrac{d}{dt}\langle\gamma',\xi\rangle=\left\langle\overline\nabla_{\gamma'}\gamma',\xi\right\rangle+\left\langle\gamma',\overline\nabla_{\gamma'}\xi\right\rangle.\]
The first term in the RHS vanishes since $\gamma$ is a geodesic, and the second one also vanishes because $\overline\nabla_{\gamma'}\xi=\tau\gamma'\wedge\xi$ (see Lemma~\ref{lema:curvaturafibrado}).
\end{proof}

Given a real number $\mu\in\R$ and a smooth curve $\alpha:[a,b]\to M$, we can consider the smooth curve
\begin{equation}\label{eqn:levantamiento-angulo-constante}
\gamma:[a,b]\to\E,\quad \gamma(t)=\phi_{\mu t}(\wt\alpha(t)),
\end{equation}
wher $\{\phi_t\}$ is the group of vertical translations associated to a unit vertical vector field $\xi$. The chain rule allows us to compute
 \[\gamma'(t)=\mu\xi_{\gamma(t)}+(d\phi_{\mu t})_{\wt\alpha(t)}(\wt\alpha'(t)),\]
so $\gamma$ makes a constant angle with $\xi$ and will be our candidate to geodesic. Taking into account that $[\wt\alpha',\xi]=0$ and equation~\eqref{conexion-levantamiento}, we get
\begin{equation}\label{eqn:killing-ecuacion-geodesicas}
\overline\nabla_{\gamma'}\gamma'=2\mu\tau\wt\alpha'\wedge\xi+\overline{\nabla_{\alpha'}\alpha'}.\end{equation}
Let us suppose that $\alpha$ has unit speed  and consider $J$ the $\pm\frac{\pi}{2}$-rotation in $TM$ (the sign will be chosen below). Then, there exists a function $\kappa_g:[a,b]\to\R$, the geodesic curvature, such that $\nabla_{\alpha'}{\alpha'}=\kappa_g\cdot J\alpha'$. The horizontal lift of $J\alpha'$ is a horizontal and unitary vector field along $\wt\alpha$, orthogonal to $\wt\alpha'$. Hence, we can choose the sign of $J$  so the horizontal lift of $J\alpha'$ is equal to $-\wt\alpha'\wedge\xi$. Now~\eqref{eqn:killing-ecuacion-geodesicas} implies that $\gamma$ is a geodesic if and only if
\begin{equation}\label{eqn:killing-geodesicas}
\kappa_g(t)=2\mu\,\tau(\alpha(t)).
\end{equation}

\begin{lemma}\label{lema:killing-geodesicas}
Given $\mu\in\R$,  $p\in M$ and $v\in T_pM$, there exists $\varepsilon>0$ and a unique unit-speed smooth curve $\alpha:\ ]-\varepsilon,\varepsilon[\ \to M$ such that $\alpha(0)=p$, $\alpha'(0)=v$ and satisfying~\eqref{eqn:killing-geodesicas}.

Moreover, if $M$ is complete then $\alpha$ extends to the whole real line.
\end{lemma}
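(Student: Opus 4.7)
The plan is to recast the problem as a smooth first-order ODE on the unit tangent bundle $T^1 M$, apply Picard-Lindel\"of for local existence and uniqueness, and then extend globally by a Hopf-Rinow compactness argument.

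First I would fix a local orthonormal frame $\{e_1, e_2\}$ of $M$ around $p$ with associated connection $1$-form $\omega$, and represent any unit-speed curve in the form $\alpha'(s) = \cos\theta(s)\, e_1(\alpha(s)) + \sin\theta(s)\, e_2(\alpha(s))$ for a smooth angle function $\theta$. The standard identity $\nabla_{\alpha'}\alpha' = (\theta'(s) + \omega(\alpha'(s)))(-\sin\theta\, e_1 + \cos\theta\, e_2)$ turns the prescribed geodesic-curvature equation $\kappa_g(s) = 2\mu\, \tau(\alpha(s))$ into the autonomous system
\begin{align*}
\alpha'(s) &= \cos\theta(s)\, e_1(\alpha(s)) + \sin\theta(s)\, e_2(\alpha(s)), \\
\theta'(s) &= 2\mu\, \tau(\alpha(s)) - \omega(\alpha'(s)).
\end{align*}
These equations describe a smooth vector field $X$ on $T^1 M$ that is intrinsically defined (i.e., independent of the chosen frame). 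Applying Picard-Lindel\"of to $X$ at $(p, v) \in T^1 M$ then produces a unique local integral curve, whose projection to $M$ is the desired unit-speed curve $\alpha$ satisfying $\alpha(0) = p$, $\alpha'(0) = v$, and~\eqref{eqn:killing-geodesicas} on some maximal interval $(a, b)\ni 0$.

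For the global assertion, assume $M$ is complete and argue by contradiction, supposing $b < \infty$ (the case $a > -\infty$ is analogous). Since $\alpha$ has unit speed, $d_M(p, \alpha(s)) \leq s < b$ for all $s \in [0, b)$, so $\alpha([0, b))$ is contained in the closed metric ball $\overline{B}(p, b)$, which is compact by Hopf-Rinow. Consequently the integral curve $(\alpha(s), \alpha'(s))$ stays in the compact subset $T^1 M\big|_{\overline{B}(p, b)}$ for every $s \in [0, b)$. Since an integral curve of a smooth vector field cannot escape to infinity in finite time while remaining in a compact set, it extends past $b$, contradicting maximality. Hence $b = +\infty$, and symmetrically $a = -\infty$.

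The only genuine subtlety I foresee is checking that the vector field $X$ is globally well-defined and smooth on $T^1 M$: the formula is written in a chosen local frame, so one must verify that different choices yield the same vector field, which reduces to the standard transformation behaviour of $\omega$ under a rotation of the frame. Once this is granted, both halves of the statement reduce to routine ODE theory.
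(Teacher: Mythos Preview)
Your proposal is correct and follows essentially the same strategy as the paper: both reduce the prescribed-curvature condition to a first-order ODE in an angle variable $\theta$ and invoke standard existence and uniqueness theory. The paper works in conformal coordinates $(x,y)$ with conformal factor $\lambda$, obtaining the explicit system~\eqref{eqn:geodesicas-killing} in the variables $(x,y,\theta)$, while you phrase the same system intrinsically as a smooth flow on the unit tangent bundle $T^1M$ using an orthonormal frame and its connection form; these are equivalent formulations of the same ODE. For the global extension the paper simply remarks that one may pass from chart to chart in a conformal atlas, whereas your Hopf--Rinow compactness argument (the unit-speed curve stays in a closed metric ball, hence in a compact piece of $T^1M$, so the integral curve cannot blow up in finite time) is more explicit and makes the role of completeness more transparent. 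Either route is valid; your version has the advantage of being frame-independent once the well-definedness of $X$ on $T^1M$ is checked, which as you note is routine.
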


\begin{proof}
We will work in a conformal parametrization $\varphi:U\subset\R^2\to M$ compatible with the orientation fixed above, where $U$ is a neighbourhood of $p$. Then, we identify $\alpha$ with the coordinates $(x,y)=\varphi^{-1}\circ\alpha$. Since $\alpha$ has unit speed, there must exist a smooth function $\theta$ such that $x'=\lambda^{-1}\cos\theta$ e $y'=\lambda^{-1}\sin\theta$, where $\lambda$ denotes the conformal factor. The geodesic curvature of $\alpha$ with respect to $J\alpha'=-y'\p_x+x'\p_y$ is given by
\begin{equation*}
\kappa_g=\theta'+\frac{\lambda_x}{\lambda^2}\sin\theta-\frac{\lambda_y}{\lambda^2}\cos\theta.
\end{equation*}
Now, equation~\eqref{eqn:killing-geodesicas} becomes the first-order ODE system:
\begin{equation}\label{eqn:geodesicas-killing}
\left\{\begin{array}{l}
x'=\frac{1}{\lambda(x,y)}\cos\theta,\\
y'=\frac{1}{\lambda(x,y)}\sin\theta,\\
\theta'=2\mu\,\tau(x,y)-\frac{\lambda_x(x,y)}{\lambda(x,y)^2}\sin\theta+\frac{\lambda_y(x,y)}{\lambda^2(x,y)}\cos\theta.
\end{array}\right.
\end{equation}
The general theory of ODEs guarantees the existence of a unique smooth solution in a neighbourhood of the origin when prescribing $\alpha(0)$, $\alpha'(0)$ (note that these initial data are equivalent to $x(0)$, $y(0)$ and $\theta(0)$). Observe that the solution can be extended as long as $\alpha$ is contained in $U$, so if $M$ is complete and we take an atlas consisting of conformal parametrizations compatible with the orientation, then $\alpha$ extends to the whole real line.
\end{proof}

It is important to notice that the curve $\gamma$ given by Lemma~\ref{lema:killing-geodesicas} satisfies $\|\gamma'\|^2=1+\mu^2$ so, after a reparametrization by arc-length, we obtain $\langle\gamma',\xi\rangle=\mu/\sqrt{1+\mu^2}$. This last expression varies in $]-1,1[$ when $\mu\in\R$, so this construction covers all geodesics in $\E$, except for the vertical ones.

\begin{proposition}\label{prop:killing-geodesicas}
Given $p\in\E$, all geodesics in $\E$ passing through $p$ are of one (and only one) of the following types:
\begin{itemize}
 \item[a)] vertical geodesics (fibers of the submersion),
 \item[b)] horizontal lifts of geodesics in $M$ passing through $\pi(p)$,
 \item[c)] of the form $\gamma(t)=\phi_{\mu t}(\wt\alpha(t))$, where $\wt\alpha$ is a horizontal lift of $\alpha$ in $M$ such that $\alpha(0)=\pi(p)$ and satisfying~\eqref{eqn:killing-geodesicas} for some $\mu\neq 0$.
\end{itemize}
In particular, if $M$ is complete, then so is $\E$.
\end{proposition}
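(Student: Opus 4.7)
The plan is to split on the value of the (constant) angle of the geodesic with $\xi$. Let $\gamma$ be a geodesic in $\E$ through $p$, parametrized by arc-length, and set $c := \langle \gamma',\xi\rangle$. By the preceding lemma $c$ is independent of $t$, and since $|\gamma'|=|\xi|=1$ one has $c\in[-1,1]$. The three subsets $\{-1,1\}$, $\{0\}$ and $(-1,1)\sm\{0\}$ will correspond exactly to types (a), (b) and (c), making the three possibilities mutually exclusive.

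If $|c|=1$, then $\gamma'=\pm\xi$ identically, so $\gamma$ is an integral curve of $\xi$ and hence a fiber of $\pi$ (type (a)). If $c=0$, then $\gamma$ is horizontal, so along $\gamma$ one has $\gamma'=\overline{\alpha'}$ for $\alpha:=\pi\circ\gamma$; projecting the geodesic equation $\overline\nabla_{\gamma'}\gamma'=0$ to the horizontal distribution via~\eqref{conexion-levantamiento} (whose vertical remainder $[\overline X,\overline Y]^v$ sits in the vertical part) yields $\overline{\nabla_{\alpha'}\alpha'}=0$, hence $\nabla_{\alpha'}\alpha'=0$, and $\gamma$ is the horizontal lift of a geodesic in $M$ (type (b)).

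For $c\in(-1,1)\sm\{0\}$, I set $\mu=c/\sqrt{1-c^2}$ and reparametrize $\gamma$ so that $|\gamma'|^2=1+\mu^2$, thus matching the speed of the curves in~\eqref{eqn:levantamiento-angulo-constante} and giving $\langle\gamma',\xi\rangle=\mu$. I then define $\wt\alpha(t):=\phi_{-\mu t}(\gamma(t))$. A chain-rule computation, combined with the facts that $(d\phi_s)\xi=\xi$ and that $\phi_s$ is an isometry preserving the horizontal distribution, gives
\[
\wt\alpha'(t)=(d\phi_{-\mu t})(\gamma'(t)-\mu\,\xi_{\gamma(t)}),
\]
and the vector $\gamma'(t)-\mu\,\xi$ is horizontal because $\langle\gamma',\xi\rangle=\mu$. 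Hence $\wt\alpha$ is horizontal, and $\pi\circ\wt\alpha=\pi\circ\gamma=:\alpha$ is unit-speed since $d\pi$ is an isometry on horizontal vectors. Thus $\wt\alpha$ is the horizontal lift of $\alpha$ and $\gamma(t)=\phi_{\mu t}(\wt\alpha(t))$ has the form~\eqref{eqn:levantamiento-angulo-constante}; the derivation leading to~\eqref{eqn:killing-ecuacion-geodesicas} then forces $\alpha$ to satisfy~\eqref{eqn:killing-geodesicas}, which is type (c).

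For the completeness assertion, when $M$ is complete I address each type separately: vertical geodesics are fibers, which share the same length by Lemma~\ref{lema:longitud-fibras} and are therefore complete whether they are circles or lines; horizontal lifts of geodesics in $M$ exist along the whole base curve, so completeness of $M$ passes to them; and in type (c), Lemma~\ref{lema:killing-geodesicas} supplies a solution $\alpha$ defined on all of $\R$, whose horizontal lift $\wt\alpha$ and the composition $\phi_{\mu t}(\wt\alpha(t))$ are then defined globally as well. The main technical point, and the one I expect to be the chief obstacle, is the case-(c) verification that $\wt\alpha(t):=\phi_{-\mu t}(\gamma(t))$ is horizontal; this relies crucially on the compatibility between the one-parameter group $\{\phi_t\}$ and the horizontal distribution, which in turn stems from $\xi$ being Killing.
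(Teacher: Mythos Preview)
Your argument is correct. The paper does not supply a separate proof for this proposition; it is presented as a summary of the discussion immediately preceding it, and the justification the paper offers is the reverse of yours: having constructed, for each $\mu\in\R$, geodesics of the form $\phi_{\mu t}(\wt\alpha(t))$ via Lemma~\ref{lema:killing-geodesicas}, it observes that after normalizing one obtains $\langle\gamma',\xi\rangle=\mu/\sqrt{1+\mu^2}$, which sweeps out $(-1,1)$ as $\mu$ varies, so together with the vertical geodesics every initial unit tangent vector at $p$ is realized, and uniqueness of geodesics does the rest. You instead start from an arbitrary geodesic, read off the constant $c$, and explicitly invert the construction by setting $\wt\alpha(t)=\phi_{-\mu t}(\gamma(t))$; this avoids the appeal to geodesic uniqueness and makes the ``only one'' clause transparent. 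Both routes rely on the same ingredients (the constant-angle lemma, equation~\eqref{eqn:killing-ecuacion-geodesicas}, and Lemma~\ref{lema:killing-geodesicas} for completeness), so the difference is organizational rather than substantive.

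Two small remarks. In case (b) your invocation of~\eqref{conexion-levantamiento} is fine because the vertical term $[\overline X,\overline Y]^v$ vanishes when $X=Y$, so $\overline\nabla_{\gamma'}\gamma'=\overline{\nabla_{\alpha'}\alpha'}$ outright; you need not speak of projecting. For the vertical geodesics in the completeness part, the cleanest justification is simply that $\xi$ is complete by Definition~\ref{defi}, so its integral curves are defined on all of $\R$; the detour through Lemma~\ref{lema:longitud-fibras} is unnecessary.
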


\begin{remark}
When the bundle curvature is constant, non-vertical geodesics project into curves with constant geodesic curvature. Moreover, the geodesic is horizontal if and only if its projection is also a geodesic. This gives an easy way to compute geodesics in the $\E(\kappa,\tau)$-spaces.
\end{remark}

\section{Existence results}\label{sec:existence}

When the base is simply-connected, Theorem~\ref{thm:unicidad} gives a uniqueness result for Killing submersions; in this section we will investigate the existence problem and prove that we can prefix any bundle curvature under the same assumption of simple connectivity. 

\subsection{Killing submersions over a disk}
Given an open set $\Omega\subset\R^2$ and $\lambda,\tau\in\mathcal{C}^\infty(\Omega)$, $\lambda>0$, we wonder whether it is possible to solve for $a$ and $b$ in~\eqref{eqn:tau-divergencia-killing}. An explicit way of doing it provided that $\Omega$ is star-shaped is given in the following lemma by just taking $\delta=2\lambda^2\tau$.

\begin{lemma}
Let $\Omega\subset\R$ be open and star-shaped with respect to the origin, and $\delta\in\mathcal{C}^\infty(\Omega)$. Then, $\eta\in\mathcal{C}^\infty(\Omega)$ given by
\[\eta(x,y)=\int_0^1s\ \delta(xs,ys)\df s,\]
satisfies the identity $\delta=(x\eta)_x+(y\eta)_y$.
\end{lemma}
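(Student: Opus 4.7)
The plan is a direct computation. First, I would expand the divergence-type expression on the right-hand side,
\[
(x\eta)_x + (y\eta)_y = 2\eta + x\,\eta_x + y\,\eta_y,
\]
so the claim reduces to showing $\delta = 2\eta + x\,\eta_x + y\,\eta_y$.

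Next, I would differentiate $\eta$ under the integral sign (which is justified because $\delta$ is smooth and the star-shapedness of $\Omega$ with respect to $0$ ensures $(xs,ys)\in\Omega$ for all $s\in[0,1]$, so the integrand is smooth in $(x,y,s)$ on a compact parameter interval). This yields
\[
x\,\eta_x(x,y) + y\,\eta_y(x,y) = \int_0^1 s^2\bigl(x\,\delta_x(xs,ys)+y\,\delta_y(xs,ys)\bigr)\df s.
\]

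The key observation is that the inner expression is precisely the total $s$-derivative of $\delta$ along the ray, i.e.\ $\tfrac{d}{ds}\bigl[\delta(xs,ys)\bigr] = x\,\delta_x(xs,ys)+y\,\delta_y(xs,ys)$. Integrating by parts in $s$ then gives
\[
\int_0^1 s^2\,\tfrac{d}{ds}\bigl[\delta(xs,ys)\bigr]\df s = \bigl[s^2\,\delta(xs,ys)\bigr]_0^1 - \int_0^1 2s\,\delta(xs,ys)\df s = \delta(x,y) - 2\eta(x,y).
\]
Substituting back yields $2\eta + x\,\eta_x + y\,\eta_y = \delta$, which is the desired identity.

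There is no real obstacle here; the argument is a one-line Poincaré-lemma style calculation once one spots that $x\delta_x+y\delta_y$ along the ray is the $s$-derivative of $\delta(xs,ys)$. The only care required is invoking smooth dependence on parameters to differentiate under the integral, which is immediate.
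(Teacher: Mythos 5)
Your computation is correct and is exactly the ``direct computation'' the paper omits: expand the divergence, differentiate under the integral, recognize $x\delta_x(xs,ys)+y\delta_y(xs,ys)$ as $\tfrac{d}{ds}\delta(xs,ys)$, and integrate by parts. Nothing further is needed.
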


\begin{proof}
It is a direct computation.
\end{proof}

\begin{theorem}\label{teor:clasificacion-killing-local}
Let $\Omega\subseteq\R^2$ be an open set star-shaped with respect to the origin and $\lambda,\tau\in \mathcal{C}^\infty(\Omega)$ with $\lambda>0$. If $\pi:\E\to(\Omega,\lambda^2(\df x^2+\df y^2))$ is a Killing submersion with bundle curvature $\tau$ and $\E$ is simply-connected, then it is isomorphic to the canonical example
\[\pi_1:(\Omega\times\R,\df s^2)\to(\Omega,\lambda^2(\df x^2+\df y^2)),\quad\pi_1(x,y,z)=(x,y),\]
\[\df s^2=\lambda(x,y)^2(\df x^2+\df y^2)+(\df z+\eta(x,y)(y\df x-x\df y))^2,\]
where the function $\eta\in \mathcal{C}^\infty(\Omega)$ is given by
\begin{equation}\label{eqn:eta}
\eta(x,y)=2\int_0^1s\ \tau(xs,ys)\ \lambda(xs,ys)^2\df s.
\end{equation}
\end{theorem}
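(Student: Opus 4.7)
The plan is to identify the candidate canonical example within the framework of Definition~\ref{defi:canonical}, verify that its bundle curvature matches $\tau$ via the divergence formula~\eqref{eqn:tau-divergencia-killing}, and then invoke the uniqueness theorem (Theorem~\ref{thm:unicidad}).

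First, I would read off the triple $(\lambda, a, b)$ underlying the proposed example. Expanding $(\df z + \eta(y\df x - x\df y))^2$ and matching against $(\df z - \lambda(a\df x + b\df y))^2$ forces $a = -\eta y/\lambda$ and $b = \eta x/\lambda$, so $\lambda b = \eta x$ and $\lambda a = -\eta y$. Hence the $\df s^2$ appearing in the statement is precisely $\df s^2_{\lambda,a,b}$ for this choice, and is therefore a canonical example in the sense of Definition~\ref{defi:canonical}.

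Second, I would use the divergence formula~\eqref{eqn:tau-divergencia-killing} to compute the bundle curvature $\tau_1$ of this canonical example:
$$2\tau_1 = \tfrac{1}{\lambda^2}\bigl((\lambda b)_x - (\lambda a)_y\bigr) = \tfrac{1}{\lambda^2}\bigl((x\eta)_x + (y\eta)_y\bigr).$$
Applying the lemma immediately preceding the theorem with $\delta = 2\lambda^2\tau$ (smooth on the star-shaped $\Omega$), the function $\eta$ defined by~\eqref{eqn:eta} satisfies $(x\eta)_x + (y\eta)_y = 2\lambda^2\tau$. Thus $\tau_1 = \tau$, so the canonical example and $\pi$ share the same bundle curvature with respect to the natural orientations.

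Third, I would verify that both submersions have fibers of infinite length, so that Theorem~\ref{thm:unicidad} applies. The canonical example manifestly does. For $\pi:\E\to\Omega$, since $\Omega$ is non-compact, Proposition~\ref{prop:existencia-seccion} yields a local diffeomorphism $\Psi:\Omega\times\R\to\E$; if the fibers had finite length $L>0$, then $\Psi$ would descend to a diffeomorphism from the $\Z$-quotient by the vertical translation $\phi_L$, presenting $\E$ as a nontrivial $S^1$-bundle over the contractible space $\Omega$ and forcing $\pi_1(\E) = \Z$. This contradicts simple connectivity of $\E$, so the fibers of $\pi$ must be infinite. Finally I would apply Theorem~\ref{thm:unicidad}(a) with $h=\id_\Omega$ and any choice of base-points $p_0\in\E$, $q_0\in\Omega\times\R$ satisfying $\pi_1(q_0)=\pi(p_0)$; the condition $\tau_1\circ h=\tau$ is automatic, giving the required orientation-preserving isomorphism of Killing submersions. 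The only real obstacle is the topological fiber-length step; the remaining verifications are a short algebraic consequence of~\eqref{eqn:tau-divergencia-killing},~\eqref{eqn:eta} and the preceding lemma.
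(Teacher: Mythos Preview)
Your proof is correct and follows essentially the approach the paper intends: verify via the preceding lemma and~\eqref{eqn:tau-divergencia-killing} that the explicit canonical example has bundle curvature $\tau$, then invoke Theorem~\ref{thm:unicidad} after checking the fiber-length hypothesis. (One terminological slip: the $S^1$-bundle over the contractible $\Omega$ is \emph{trivial}, not nontrivial---but your conclusion $\pi_1(\E)\cong\Z$, and hence the contradiction with simple connectivity, stands regardless.)
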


\begin{remark}
Note that star-shapeness makes everything explicit but an existence and uniqueness theorem also holds in the (more general) simply-connected case. It suffices to conformally parametrize such a simply-connected domain by a disk and apply Theorem~\ref{teor:clasificacion-killing-local}.
 \end{remark}

\begin{remark}
If we drop the condition that $\E$ is simply-connected, it can be easily shown that any Killing submersion $\pi:\E\to\Omega$ is isomorphic to a Riemannian quotient of the Killing submersion constructed in Theorem~\ref{teor:clasificacion-killing-local} by a vertical translation. In particular, $\E$ is diffeomorphic to $\Omega\times\s^1$.
\end{remark}

It is interesting to paraticularize Theorem~\ref{teor:clasificacion-killing-local} to the case $M=\M^2(\kappa)$, the complete simply-connected surface with constant Gaussian curvature $\kappa\in\R$, to get models for all Killing submersions over $\R^2$, $\h^2(\kappa)$ and $\s^2(\kappa)$ minus a point. Given $\kappa\in\R$, we define $\lambda_\kappa\in\mathcal{C}^\infty(\Omega_\kappa)$ as \[\lambda_\kappa(x,y)=\left(1+\tfrac{\kappa}{4}(x^2+y^2)\right)^{-1},\]
where
\[\Omega_k=\begin{cases}\{(x,y)\in\R^2:x^2+y^2<\frac{-4}{\kappa}\}&\text{if }\kappa<0,\\\R^2&\text{if }\kappa\geq 0.\end{cases}\]
Then, the metric $\lambda_\kappa^2(\df x^2+\df y^2)$ in $\Omega_\kappa$ has constant Gaussian curvature $\kappa$. If $\tau$ is constant, then $\eta=\tau\,\lambda_\kappa$ in~\eqref{eqn:eta}, and we obtain the metrics of the spaces $\E(\kappa,\tau)\equiv\Omega_\kappa\times\R$ given by Daniel in~\cite[Section~2.3]{Daniel07}:
\[\lambda_\kappa^2(\df x^2+\df y^2)+(\df z+\tau\,\lambda_\kappa\,(y\df x-x\df y))^2.\]
Recall that we are not considering a whole fiber of a point in $\s^2(\kappa)$ for $\kappa>0$. The global case will be treated in the next section.

\subsection{Killing submersions over a $2$-sphere}

We can define Killing submersions over $\s^2$ so different as the Riemannian products $\s^2\times\R$ and $\s^2\times\s^1$ (both with $\tau=0$) or the Berger spheres and the lens spaces $L(n,1)$ via the Hopf projection (see Remark~\ref{rmk:hopf} below). Along this section, we will suppose that the surface playing the role of base surface is $(\s^2,g)$ for some Riemannian metric $g$. 

Unlike in the cases treated above, this surface is compact. Hence, given a Killing submersion $\pi:\E\to(\s^2,g)$ and its bundle curvature $\tau\in \mathcal{C}^\infty(M)$, the \emph{total bundle curvature} 
\[T=\int_M\tau\]
is well-defined and finite. This quantity will make the difference between the possible topologies of the total space.

\subsubsection{The case $T=0$}

\begin{proposition}\label{prop:s2xR}
Let $\pi:\E\to(\s^2,g)$ be a Killing submersion with total bundle curvature $T=0$. Then, the submersion admits a global smooth section.
\begin{itemize}
 \item[a)] If the length of the fibers of $\pi$ is infinite, then it is isomorphic to
\[\pi_1:(\s^2\times\R,\df s^2)\to(\s^2,g),\quad\pi_1(p,t)=p,\]
for some Riemannian metric $\df s^2$ defined in $\s^2\times\R$ and such that $\p_t$ is a unit vertical Killing vector field.
 \item[b)] Otherwise, the Killing submersion is isomorphic to the Riemannian quotient of the example in item (a) by some vertical translation.
\end{itemize}
\end{proposition}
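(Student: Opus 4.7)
The plan is to produce a global smooth section $F: \s^2 \to \E$ and then imitate the $\Psi(p,t) = \phi_t(F(p))$ construction of Proposition~\ref{prop:existencia-seccion}. Once $F$ is available, $\Psi: \s^2 \times \R \to \E$ is automatically a surjective local diffeomorphism, and the pullback of the metric of $\E$ makes $\p_t$ a unit vertical Killing field on $\s^2 \times \R$. The same injectivity analysis as in Proposition~\ref{prop:existencia-seccion} will then separate the two cases: when the fibers of $\pi$ have infinite length, $\Psi$ is injective and hence a global isometric diffeomorphism, giving (a); when the fibers have length $\ell$, the identity $\Psi(p,t+\ell) = \phi_\ell(\Psi(p,t)) = \Psi(p,t)$ makes $\Psi$ descend to an isometric diffeomorphism $\s^2 \times (\R/\ell\Z) \to \E$, exhibiting $\E$ as the Riemannian quotient of the $\s^2 \times \R$ example by the vertical translation of amplitude $\ell$, giving (b).

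The core of the proof is thus the existence of $F$. I fix $p_0 \in \s^2$ and a small open disk $D$ around $p_0$, and apply Proposition~\ref{prop:existencia-seccion} to the noncompact surfaces $\s^2 \setminus \{p_0\}$ and $D$ to obtain smooth sections $F_0$ and $F_1$ of $\pi$ over them. On the overlap $D \setminus \{p_0\}$ the transition $g$, defined by $F_0 = \phi_g(F_1)$, takes values in $\R$ when the fibers are infinite and in $\R/\ell\Z$ when they have length $\ell$; since $D \setminus \{p_0\}$ is homotopy-equivalent to a circle, the gluing of $F_0$ and $F_1$ reduces to exhibiting a smooth $\R$-valued lift of $g$, which exists iff the monodromy of $g$ around a small loop $\alpha \subset D \setminus \{p_0\}$ enclosing a disk $D_0 \subset D$ around $p_0$ vanishes. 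To compute this monodromy I lift $\pi^{-1}(D)$ and $\pi^{-1}(\s^2 \setminus \{p_0\})$ to the canonical $\R$-fibered chart covers provided by Proposition~\ref{prop:trivializacion}, so that Proposition~\ref{prop:holonomia} applies inside each, and compare the two representations $\wt\alpha(s) = \phi_{\bar a(s)}(F_1(\alpha(s))) = \phi_{\bar b(s)}(F_0(\alpha(s)))$ of the horizontal lift $\wt\alpha$ of $\alpha$. Proposition~\ref{prop:holonomia} inside each chart yields $\bar a(1) - \bar a(0) = 2\int_{D_0}\tau$ and $\bar b(1) - \bar b(0) = -2\int_{\s^2 \setminus D_0}\tau$, with the minus sign reflecting that $\alpha$ bounds $\s^2 \setminus D_0$ with the opposite orientation. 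Since a continuous $\R$-valued lift of $g$ along $\alpha$ is $\bar g(\alpha(s)) = \bar a(s) - \bar b(s)$, the monodromy equals $(\bar a(1) - \bar a(0)) - (\bar b(1) - \bar b(0)) = 2T$, which vanishes exactly under the hypothesis $T = 0$, producing the desired global lift $\bar g$ on $D \setminus \{p_0\}$.

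With $\bar g$ in hand, I choose a bump function $\chi \in \mathcal{C}^\infty(\s^2)$ supported in $D$ and identically equal to $1$ on a smaller disk $D'$ around $p_0$, and define $F(p) = F_0(p)$ on $\s^2 \setminus D$, $F(p) = \phi_{(1-\chi(p))\bar g(p)}(F_1(p))$ on $D \setminus \{p_0\}$, and $F(p_0) = F_1(p_0)$. On $D \setminus D'$ the two formulas agree because $\phi_{\bar g}(F_1) = F_0$; on $D'$ the factor $1-\chi$ vanishes, so the middle formula reduces to $F_1$ and extends smoothly across $p_0$. This produces the required global smooth section $F: \s^2 \to \E$ and, combined with the first paragraph, completes the proof. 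The main technical obstacle is the horizontal-lift computation of the second paragraph: since Proposition~\ref{prop:holonomia} is stated only for infinite-fiber submersions, one must pass to the $\R$-fibered canonical chart covers of Proposition~\ref{prop:trivializacion} and carefully track orientations on both sides of $\alpha$ to identify $2T$ as the precise obstruction to gluing $F_0$ and $F_1$.
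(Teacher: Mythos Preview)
Your proof is correct and arrives at the same global section, but by a genuinely different route from the paper. The paper first finds a great circle $\Gamma$ splitting $\s^2$ into hemispheres $D_1,D_2$ with $\int_{D_1}\tau=\int_{D_2}\tau=0$ (an intermediate-value argument left implicit), so that by Proposition~\ref{prop:holonomia} any horizontal lift $\wt\Gamma$ of $\Gamma$ closes up; sections over $\overline D_1$ and $\overline D_2$ extending $\wt\Gamma$ then match along $\Gamma$ and, after smoothing, give the global section. You instead take the two-chart cover $\{\s^2\setminus\{p_0\},\,D\}$, invoke Proposition~\ref{prop:existencia-seccion} for sections on each piece, and identify the gluing obstruction as the monodromy of the transition cocycle around $p_0$, which your double application of Proposition~\ref{prop:holonomia} (in the $\R$-fibered covers, with the opposite boundary orientations of $D_0$ and $\s^2\setminus D_0$) computes to be $2T$. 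Your argument is essentially the \v{C}ech computation of the Euler class of the underlying circle bundle and makes transparent that $T$, up to the factor $2$, \emph{is} the topological obstruction; as a by-product it shows directly (without Proposition~\ref{prop:s3}) that infinite fibers force $T=0$ and that for finite fibers $2T$ is an integer multiple of the fiber length. The paper's approach is more hands-on and avoids any cocycle language, at the cost of needing the auxiliary balanced great circle; yours is more systematic, at the cost of the orientation bookkeeping you correctly flag in your last sentence.
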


\begin{proof}
The condition $T=0$ guarantees the existence of a great circle $\Gamma\subset\s^2$ such that $D_1$ and $D_2$, the two open components of $\s^2\sm\Gamma$, satisfy \[\textstyle\int_{D_1}\tau=\int_{D_2}\tau=0.\]
Let $\wt\Gamma\subset\E$ be any horizontal lift of $\Gamma$. If the fibers of $\pi$ have infinite length, then Proposition~\ref{prop:holonomia} implies that $\wt\Gamma$ is a closed curve in $\E$. For $i\in\{1,2\}$, as $\wt\Gamma$ lies in the boundary of $\pi^{-1}(\overline D_i)$ and projects one-to-one by $\pi$ onto $\Gamma$, there exists a section $F_i:\overline{D_i}\to\E$ with $F_i(\Gamma)=\wt\Gamma$. Thus $F:\s^2\to\E$ defined by $F=F_i$ in $\overline{D}_i$ is a global continuous section, and there is no loss of generality in supposing that  $F$ is smooth (just by perturbing it in a neighborhood of $\Gamma$). Then $\Psi:\s^2\times\R\to\E$ given by $\Psi(p,t)=\phi_t(F(p))$ is a global diffeomorphism, where $\phi_t$ denotes the $1$-parameter group of vertical isometries. The induced metric $\df s^2$ in $\s^2\times\R$ through $\Psi$ satisfies the requirements of item (a). 

In the case that the length of the fibers of $\pi$ is finite, we can work in the universal cover of $\pi^{-1}(\overline{D_i})$, for $i\in\{1,2\}$, repeat the arguments above, and finally take a convenient quotient by a vertical translation.
\end{proof}

The rest of this section is devoted to obtain explicit models for the metrics in $\s^2\times\R$ making the projection over the first factor a Killing submersion. This will show that Proposition~\ref{prop:s2xR} is sharp, but we will also obtain a quite general method for constructing Killing submersions.

Inspired by the canonical metrics in~\eqref{eqn:metrica-general-killing}, let us consider arbitrary functions $a_1,\ldots,a_n\in\mathcal{C}^\infty(\R^n)$ and the projection over the first $n$ coordinates $\pi:(\R^{n+1},\df s^2)\to\R^n$. Here, we endow $\R^n$ with the usual metric and 
\begin{equation}\label{eqn:metrica-killing-Rn}
\textstyle\df s^2=\sum_{k=1}^n\df x_i^2+\left(\df t-\sum_{k=1}^na_k\df x_k\right)^2,
\end{equation}
where we denote by $(x_1,\ldots,x_n,t)$ the usual coordinates of $\R^{n+1}$. Then, $\pi$ is a Riemannian submersion whose fibers are the integral curves of the unit Killing vector field $\p_t$ in $(\R^{n+1},\df s^2)$. 

Given a smooth orientable surface $\Sigma$, we can isometrically embed it in $\R^n$ for some $n\in\N$ by the Nash embedding Theorem~\cite{Nash}. Then, we shall consider the metric induced by~\eqref{eqn:metrica-killing-Rn} in $\Sigma\times\R\subset\R^{n+1}$. Obviously, $\pi$ restricts to a Killing submersion $\Sigma\times\R\to\Sigma$. We will now compute its bundle curvature in terms of the functions $a_k$, but we will first need a convention for the orientation in $\Sigma\times\R$: if a local frame $\{e_1,e_2\}$ in $\Sigma$ is positively oriented, then $\{E_1,E_2,\p_t\}$ will be said positively oriented in $\Sigma\times\R$, where $E_i$ is the horizontal lift of $e_i$ for $i\in\{1,2\}$.

\begin{proposition}\label{prop:ejemplos_s2xR}
Let $\Sigma$ be a smooth oriented surface isometrically embedded in $\R^n$. The Killing submersion $\Sigma\times\R\to\Sigma$ defined above has bundle curvature
\[\tau=\tfrac{1}{2}\,\div_\Sigma(JT),\]
where $T=(\p_t)^\top\in\X(\Sigma)$ is the component of $\p_t$ tangent to $\Sigma\equiv\Sigma\times\{0\}\subset\R^{n+1}$ with respect to $\df s^2$, and $J:\X(\Sigma)\to\X(\Sigma)$ is a $\frac\pi2$-rotation in $T\Sigma$.
\end{proposition}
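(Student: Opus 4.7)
The plan is to reduce the calculation to the local canonical form of Proposition~\ref{prop:trivializacion} and then invoke the divergence formula~\eqref{eqn:tau-divergencia-killing}.

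Around a fixed point of $\Sigma$ I choose a conformal parametrization $\varphi\colon\Omega\subset\R^2\to V\subset\Sigma$ with conformal factor $\lambda>0$, and set $\Psi(x,y,t)=(\varphi(x,y),t)$, a map from $\Omega\times\R$ into $\R^{n+1}$. The conformality identities $|\varphi_x|^2=|\varphi_y|^2=\lambda^2$ and $\langle\varphi_x,\varphi_y\rangle_{\R^n}=0$ convert the pullback of~\eqref{eqn:metrica-killing-Rn} by $\Psi$ into
\[\Psi^*\df s^2=\lambda^2(\df x^2+\df y^2)+\bigl(\df t-\tilde A\,\df x-\tilde B\,\df y\bigr)^2,\]
where $\tilde A=\sum_k(a_k\circ\varphi)(\varphi_k)_x$ and $\tilde B=\sum_k(a_k\circ\varphi)(\varphi_k)_y$. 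This is precisely the canonical metric of Definition~\ref{defi:canonical} with $\lambda a=\tilde A$ and $\lambda b=\tilde B$, so~\eqref{eqn:tau-divergencia-killing} yields $2\lambda^2\tau=\tilde B_x-\tilde A_y$; the second-order terms $a_k(\varphi_k)_{xy}$ cancel by equality of mixed partials.

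To identify $T$, I observe that the $\df s^2$-metric dual of $\partial_t$ in $\R^{n+1}$ is the $1$-form $\theta=\df t-\sum_k a_k\df x_k$, and that its pullback under the inclusion $\iota\colon\Sigma\hookrightarrow\R^n\times\{0\}$ annihilates $\df t$ and equals $-\tilde A\,\df x-\tilde B\,\df y$ in the chart. Taking the vector field on $\Sigma$ whose intrinsic dual $1$-form coincides with $\iota^*\theta$ gives
\[T=-\frac{\tilde A}{\lambda^2}\partial_x-\frac{\tilde B}{\lambda^2}\partial_y.\]
With the orientation fixed above Proposition~\ref{prop:ejemplos_s2xR}, the rotation $J$ acts in the chart by $J\partial_x=\partial_y$ and $J\partial_y=-\partial_x$, so $JT=\frac{\tilde B}{\lambda^2}\partial_x-\frac{\tilde A}{\lambda^2}\partial_y$. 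The conformal-coordinate divergence formula $\div_\Sigma V=\lambda^{-2}[\partial_x(\lambda^2V^x)+\partial_y(\lambda^2V^y)]$ applied to $V=JT$ then collapses the rational coefficients to produce
\[\div_\Sigma(JT)=\frac{1}{\lambda^2}\bigl(\tilde B_x-\tilde A_y\bigr)=2\tau,\]
which is the required identity.

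The main obstacle I expect is the bookkeeping of the two distinct inner products: the ambient $\df s^2$ on $\R^{n+1}$, used to transport $\partial_t$ into a $1$-form, and the intrinsic metric on $\Sigma$ induced from $\R^n$, which governs $\div_\Sigma$, $J$ and the interpretation of the pulled-back $1$-form as a vector field on $\Sigma$. Invariantly, the argument amounts to the identification $T^\flat=\iota^*\theta$ together with the surface identity $\df V^\flat=-\div_\Sigma(JV)\,\omega_\Sigma$, which converts~\eqref{eqn:tau-divergencia-killing} into the equality of $2$-forms $\iota^*\df\bigl(\sum_k a_k\df x_k\bigr)=2\tau\,\omega_\Sigma$ on $\Sigma$.
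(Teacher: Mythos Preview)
Your proof is correct and follows essentially the same route as the paper's: both work in a local conformal chart on $\Sigma$, express $\tau$ via the divergence-type formula coming from the canonical model, and then identify the resulting vector field as $JT$. The only cosmetic difference is that you pull back the ambient metric~\eqref{eqn:metrica-killing-Rn} to recognize it directly as a canonical example $\df s^2_{\lambda,a,b}$ and invoke~\eqref{eqn:tau-divergencia-killing}, whereas the paper re-derives the same expression by computing $\langle[E_1,E_2],E_3\rangle$ for the horizontal lift $E_i=\tfrac{1}{\lambda}X_{u_i}+(\text{function})\,\partial_t$; your identification of $T$ via $T^{\flat}=\iota^*\theta$ is exactly what the paper encodes in the relations $\langle X_u,\partial_t\rangle=-\lambda f$, $\langle X_v,\partial_t\rangle=-\lambda g$ followed by ``it is easy to check that $Y=JT$''.
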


\begin{proof}
Let $X:\Omega\subset\R^2\to\Sigma$ be a local conformal parametrization of $\Sigma$ with conformal factor $\lambda\in\mathcal{C}^\infty(\Omega)$, and such that $\{\frac{1}{\lambda}X_u,\frac{1}{\lambda}X_v\}$ is a positively oriented orthonormal frame of $T\Sigma$. Let $\{E_1,E_2\}\subset\X(X(\Omega)\times\R)$ be a horizontal lift of the frame $\{\frac{1}{\lambda}X_u,\frac{1}{\lambda}X_v\}$ which, together with $E_3=\partial_t$, is a positively oriented orthonormal frame in $X(\Omega)\times\R$. As in equation~\eqref{eqn:tau-divergencia-killing}, we can compute $\tau$ from the identity $2\tau=\langle [E_1,E_2],E_3\rangle$. Note that there exist $f,g\in\mathcal{C}^\infty(X(\Omega))$ such that $E_1=\frac{1}{\lambda}X_u+f\p_t$ and $E_2=\frac{1}{\lambda}X_v+g\p_t$, so
\begin{align*}
 [E_1,E_2]&=[\tfrac{1}{\lambda}X_u,\tfrac{1}{\lambda}X_v]+[\tfrac{1}{\lambda}X_u,g\p_t]+[f\p_t,\tfrac{1}{\lambda}X_u]+[f\p_t,g\p_t]\\
&=\tfrac{1}{\lambda^3}(\lambda_vX_u-\lambda_uX_v)+\tfrac{1}{\lambda}(g_u-f_v)\p_t.
\end{align*}
Moreover, since $0=\langle E_1,E_3\rangle=\langle\frac{1}{\lambda}X_u+f\p_t,\p_t\rangle$, we deduce that $\langle X_u,\p_t\rangle=-\lambda f$ and, analogously, $\langle X_v,\p_t\rangle=-\lambda g$. Hence,
\[2\tau=\langle [E_1,E_2],E_3\rangle=\tfrac{1}{\lambda^2}\left((\lambda g)_u-(\lambda f)_v\right)=\div_\Sigma(Y),\]
where $Y\in\X(\Sigma)$ is the vector field $\frac{g}{\lambda}X_u-\frac{f}{\lambda}X_v$. From here, it is easy to check that $Y=JT$ and we are done.
\end{proof}

\begin{remark}\label{rmk:compact-divergence}
If $\Sigma$ is compact, then $\int_\Sigma\tau=\frac{1}{2}\int_\Sigma\div(JT)=0$ as an application of the divergence theorem. Conversely, every function on a compact orientable surface $\Sigma$ with zero integral is well-known to be the divergence of some vector field on $\Sigma$.
\end{remark}

As a particular case, we may consider the round sphere
\[\s^2(\kappa)=\left\{(x,y,z)\in\R^3:x^2+y^2+z^2=\tfrac{1}{\kappa}\right\}\subset\R^3.\]
and endow $\s^2\times\R\subset\R^4$ with the metric given by~\eqref{eqn:metrica-killing-Rn} for $n=3$ and some $a_1,a_2,a_3\in \mathcal{C}^\infty(\R^3)$. The stereographic projection $X:\R^2\to\s^2(\kappa)\sm\{(0,0,1/\sqrt{\kappa})\}$ defined by
\begin{equation}\label{eqn:stereographic}
X(u,v)=\left(\frac{2u}{\kappa(u^2+v^2)+1},\frac{2v}{\kappa(u^2+v^2)+1},\frac{1}{\sqrt{\kappa}}\frac{\kappa(u^2+v^2)-1}{\kappa(u^2+v^2)+1}\right),
\end{equation}
allows us to work out the bundle curvature $\tau$ of the induced Killing submersion $\s^2(\kappa)\times\R\to\s^2(\kappa)$ as in the proof of Proposition~\ref{prop:ejemplos_s2xR}. We get
\[2\tau=\sqrt{\kappa}\left((ya_3-za_2)_x+(za_1-xa_3)_y+(xa_2-ya_1)_z\right).\]

\subsubsection{The case $T\neq0$} 
Let us consider the $3$-sphere 
\[\s^3=\{(z,w)\in\C^2:|z|^2+|w|^2=1\}\subset\C^2\]
and $\s^2(\kappa)=\{(z,t):|z|^2+t^2=\frac{1}{\kappa}\}\subset\C\times\R$ for $\kappa>0$. The submersion
\begin{equation}\label{eqn:fibracion-hopf}
 \begin{array}{rcl}
  \pi_{\text{Hopf}}:\s^3&\longrightarrow&\s^2(\kappa)\\
  (z,w)&\longmapsto&\frac{1}{\sqrt{\kappa}}(2z\bar w,|z|^2-|w|^2)
 \end{array}
\end{equation}
is known as the \emph{Hopf projection}. The fiber passing through $(z,w)\in\s^3$ is given by $\{(e^{it}z,e^{it}w):t\in\R\}$ and the $1$-parameter group of diffeomorphisms $\phi_t(z,w)=(e^{it}z,e^{it}w)$, $t\in\R$, satisfies that the orbit of a point by the action of the group coincides with its fiber with respect to the submersion.

\begin{remark}\label{rmk:hopf}
Given a natural number $n\in\N$, we can consider the quotient of $\s^3$ under the group of diffeomorphisms $G_n=\{\phi_{2\pi k/n}:k\in\{1,\ldots,n\}\}$, which is cyclic and has order $n$. The quotient $\s^3/G_n$ is known as the lens space $L(n,1)$. The condition $\pi_{\text{Hopf}}\circ\phi_t=\pi_{\text{Hopf}}$ guarantees that $\pi_{\text{Hopf}}$ induces a submersion $\pi_n:L(n,1)\to\s^2(\kappa)$. Observe that, for any $n\in\N$, $L(n,1)$ is orientable and its fundamental group is isomorphic to the cyclic group of order $n$, so two lens spaces $L(n,1)$ and $L(m,1)$ are not homeomorphic for $m\neq n$ (see~\cite{sav12} for a more detailed description).

If we endow $\s^3$ with a metric making $\pi_{\text{Hopf}}$ a Killing submersion, then the fibers of $\pi_{\text{Hopf}}$ have finite length (they are compact) and it is easy to check that $\pi_n$ is a Killing submersion when we consider the quotient metric, for all $n$. Moreover, the length of the fibers of $\pi_{\text{Hopf}}$ in $\s^3$ is $n$ times the length of the corresponding fibers of $\pi_n$ in $L(n,1)$.
\end{remark}

\begin{proposition}\label{prop:s3}
Let $\pi:\E\to(\s^2,g)$ be a Killing submersion with total bundle curvature $T\neq0$. Then, there exists $n\in\N$ such that the length of the fibers is equal to $|2T|/n$.
\begin{itemize}
 \item[a)] If $n=1$, then $\pi:\E\to(\s^2,g)$ is isomorphic to the Hopf fibration
\[\pi_{\mathrm{Hopf}}:(\s^3,\df s^2)\to(\s^2,g),\ \pi_{\mathrm{Hopf}}(z,w)=(2z\bar w,|z|^2-|w|^2),\]
for some Riemannian metric $\df s^2$ in $\s^3$ such that $\xi_{(z,w)}=\frac{\pi}{T}(iz,iw)$ is a unit Killing vector field.
 \item[b)] If $n>1$, then $\pi:\E\to(\s^2,g)$ is isomorphic to the Riemannian quotient of a submersion as in item (a) by a vertical translation of length  $|2T|/n$.
\end{itemize}
\end{proposition}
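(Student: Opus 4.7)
I would first establish that the fibers of $\pi$ are finite and have length $2|T|/n$ for some $n\in\N$. Let $\alpha:[0,1]\to\s^2$ be a simple closed piecewise $\mathcal{C}^1$-curve bounding two disks $D^+,D^-\subset\s^2$, and let $\ell\in(0,+\infty]$ denote the fiber length. The local computation in the proof of Proposition~\ref{prop:holonomia}, via equation~\eqref{eqn:caracterizacion-legendriano} applied to a sufficiently fine triangulation of $\overline{D^+}$, remains valid with the understanding that vertical displacements are only defined modulo $\ell$ when $\ell<\infty$; interior-edge contributions still cancel modulo $\ell$, so we get $s\equiv 2\int_{D^+}\tau\pmod\ell$, where $\phi_s$ sends the initial point of the horizontal lift of $\alpha$ to its final point. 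Reversing the orientation of $\alpha$ (which now bounds $D^-$) gives $-s\equiv 2\int_{D^-}\tau\pmod\ell$, and adding both congruences yields $2T\equiv 0\pmod\ell$. Since $T\neq 0$, this forces $\ell<\infty$ and $\ell=2|T|/n$ for some $n\in\N$.

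\textbf{Step 2 (Construction of the Hopf model).} Next I would build a Riemannian metric $\df s^2$ on $\s^3$ for which the Hopf projection $\pi_{\mathrm{Hopf}}:(\s^3,\df s^2)\to(\s^2,g)$ is a Killing submersion with bundle curvature $\tau$ and fibers of length $2|T|$. Pick distinct points $q_0,q_1\in\s^2$ and set $D_i=\s^2\sm\{q_i\}$ for $i\in\{0,1\}$. By Theorem~\ref{teor:clasificacion-killing-local} (applied to each disk via a conformal diffeomorphism onto a Euclidean disk), we obtain canonical Killing submersions $\pi_i:(D_i\times\R,\df s_i^2)\to D_i$ with bundle curvature $\tau|_{D_i}$; passing to the Riemannian quotient by the vertical translation of period $2|T|$ yields Killing submersions over each $D_i$ with finite fibers of length $2|T|$. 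Over the annulus $D_0\cap D_1$, Lemma~\ref{lema:clasificacion-local-killing} presents the two local models as related by a vertical shift $(x,y,z)\mapsto(x,y,z+d(x,y))$ with $d$ only required modulo $2|T|$. Using the divergence expression~\eqref{eqn:tau-divergencia-killing} and Stokes' theorem, the period of the associated closed $1$-form around a loop $\gamma\subset D_0\cap D_1$ separating $q_0$ from $q_1$ equals $\pm 2T$, which is a generator of $2|T|\,\Z$; hence $d$ is globally well-defined modulo $2|T|$, and the two local models glue into a global Killing submersion over $(\s^2,g)$. A Chern-number computation (equivalently, the same period argument) yields Chern class $\pm 1$, so the total space is diffeomorphic to $\s^3$ with the Hopf projection. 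Normalization of the unit vertical Killing field then forces $\xi_{(z,w)}=\tfrac{\pi}{T}(iz,iw)$.

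\textbf{Step 3 (Conclusion via uniqueness and lens-space quotients).} For item~(a), the submersion $\pi$ and the Hopf model just constructed are both Killing submersions over the simply-connected base $(\s^2,g)$, with the same bundle curvature $\tau$ and the same fiber length $2|T|$; Theorem~\ref{thm:unicidad} provides an isomorphism between them. For item~(b), the cyclic group $G_n=\{\phi_{2|T|k/n}:k=1,\dots,n\}$ acts freely by isometries on the Hopf model, and the Riemannian quotient is a Killing submersion over $(\s^2,g)$ with bundle curvature $\tau$ and fiber length $2|T|/n$. A second application of Theorem~\ref{thm:unicidad} identifies this quotient with $\pi$, yielding~(b).

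\textbf{Main obstacle.} The most delicate step is the gluing in Step~2: one must verify that the period of the comparison $1$-form around the equator of $D_0\cap D_1$ equals \emph{precisely} $\pm 2T$ (rather than some other multiple of $2|T|$), so that $d$ descends to a single-valued function modulo $2|T|$ and the assembled bundle is genuinely the Hopf bundle rather than a lens space. This reduces to a careful orientation-sign bookkeeping combining equation~\eqref{eqn:tau-divergencia-killing} with Stokes' theorem on each of the hemispheres bounded by $\gamma$.
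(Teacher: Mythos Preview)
Your argument is correct, but it proceeds along a genuinely different line from the paper's own proof, so a brief comparison is in order.

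The paper works \emph{inside} the given submersion. It splits $(\s^2,g)$ by a great circle $\Gamma$ into hemispheres with $\int_{D_i}\tau=T/2$, lifts $\Gamma$ horizontally, and reads off the holonomy from each side in the universal covers $W_i\cong\overline{D}_i\times\R$; the two signed height differences being $\pm|T|$ forces the fiber length to divide $|2T|$. It then observes directly that $\pi^{-1}(\overline{D}_1)$ and $\pi^{-1}(\overline{D}_2)$ are solid tori and that the horizontal lift $\wt\Gamma$, winding $n$ times vertically, dictates the gluing; for $n=1$ this is the standard genus-one Heegaard decomposition of $\s^3$ realising the Hopf fibration, and for $n>1$ one passes to an $n$-sheeted vertical cover first. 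No appeal to Theorem~\ref{thm:unicidad} is made.

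You instead build an \emph{external} model and invoke uniqueness. Your Step~1 reaches the same quantisation by the same holonomy mechanism (phrased modulo $\ell$ rather than via universal covers). In Step~2 you manufacture a Killing submersion on $\s^3$ with the prescribed $\tau$ by a clutching construction over two punctured spheres, using that the period of the transition $1$-form equals $\pm 2T$ and hence gives Chern number $\pm 1$ after quotienting the fibers to length $2|T|$; then Step~3 identifies $\pi$ with this model (or its $\Z_n$-quotient) via Theorem~\ref{thm:unicidad}. This is more bundle-theoretic and has the pleasant side effect of simultaneously proving the existence statement (Theorem~\ref{thm:existence-S2}); the paper separates these, and in fact its proof of Theorem~\ref{thm:existence-S2} points back to the gluing argument here. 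The price you pay is the careful sign bookkeeping you flag as the main obstacle, together with the fact that Lemma~\ref{lema:clasificacion-local-killing} is stated only for simply-connected $\Omega$, so on the annulus $D_0\cap D_1$ you must supply the period computation yourself rather than quote the lemma directly. Both approaches are sound; the paper's is more hands-on and self-contained, yours is cleaner once one is comfortable with Chern classes and clutching functions.
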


\begin{proof}
As in the proof of Proposition~\ref{prop:s2xR}, let us take a geodesic $\Gamma$ which divides $\s^2$ in two hemispheres $D_1$ and $D_2$ such that \[\textstyle\int_{D_1}\tau=\int_{D_2}\tau=\frac{T}{2}.\]
We parametrize $\Gamma$ as $\gamma:[a,b]\to\s^2$ and a horizontal lift $\wt\Gamma$ of $\Gamma$ as $\wt\gamma:[a,b]\to\E$. The universal Riemannian covering space of $\pi^{-1}(\overline{D}_i)$, $i\in\{1,2\}$, will be denoted by $W_i\equiv\overline D_i\times\R$, and is a closed solid cylinder. The curve $\wt\Gamma$ can be lifted to both $W_1$ and $W_2$. Since the outer conormal vector fields to $\overline{D}_1$ and $\overline{D}_2$ along their boundary have opposite directions, the difference of heights between $\wt\gamma(a)$ and $\wt\gamma(b)$ when we consider them in $W_1$ or $W_2$ is equal to $|T|$, but they have opposite signs (see the proof of Proposition~\ref{prop:holonomia}). In other words,  we will arrive to $\wt\gamma(b)$ after traveling vertically from $\wt\gamma(a)$ a distance of $|T|$, and, if we continue from $\wt\gamma(b)$, we will arrive again to $\wt\gamma(a)$ after the same distance (see Figure~\ref{fig:construccion-hopf}). Thus, the length of the fibers is an integer divisor of $|2T|$. In particular, $\pi^{-1}(\overline{D}_1)$ and $\pi^{-1}(\overline{D}_2)$ are solid tori.

\begin{figure}
\begin{center}
\includegraphics[height=4cm]{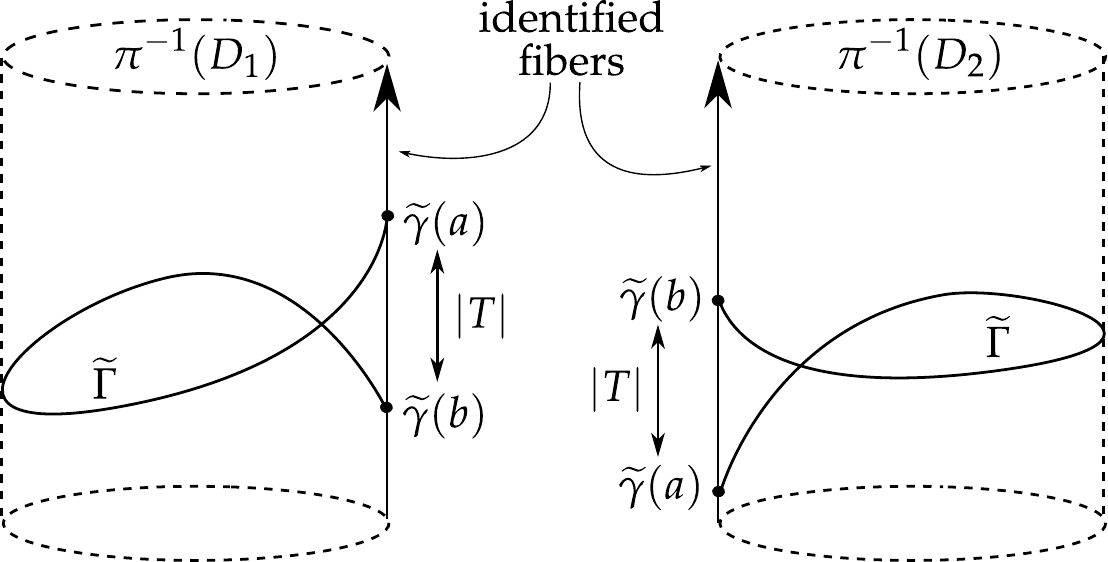}
\end{center}
\caption{The curve $\wt\Gamma$ is represented in the solid cylinders $W_1$ and $W_2$ covering $\pi^{-1}(D_1)$ and $\pi^{-1}(D_2)$, respectively, and its endpoints lie on the vertical geodesic containing the vertical arrow representing a global vertical Killing vector field. After gluing along this geodesic, we conclude that the length of the fibers is an integer divisor of $|2T|$.}\label{fig:construccion-hopf}
\end{figure}

Now, observe that the curve $\wt\Gamma$ determines how $\pi^{-1}(\overline{D_1})$ and $\pi^{-1}(\overline{D_2})$ must be glued together, and $\wt\Gamma$ turns $n$ times in the vertical direction, so we can work in a $n$-sheet vertical covering space of both tori, where $\wt\Gamma$ will look like as in Figure~\ref{fig:construccion-hopf} after identifying the top and bottom faces of the cylinders. This way of gluing the two tori along $\Gamma$ provides a manifold diffeomorphic to $\s^3$, and the induced fibration is the Hopf fibration (see~\cite{sav12}). By pulling the metric in $\E$ back via this diffeomorphism, item (a) in the statement follows. Item (b) is also proved since we only need to undo the covering space procedure by taking a quotient with respect to a vertical translation of length $|2T|/n$.
\end{proof}

We can now combine the local existence given by Theorem~\ref{teor:clasificacion-killing-local} with Propositions~\ref{prop:s2xR} and~\ref{prop:s3} to obtain a description of \emph{all} Killing submersions over a Riemannian $2$-sphere.

\begin{theorem}\label{thm:existence-S2}
Let $g$ be a Riemannian metric on $\s^2$ and $\tau\in\mathcal{C}^\infty(\s^2)$. Up to isomorphism, there exists a unique Killing submersion over $(\s^2,g)$ with bundle curvature $\tau$ and whose total space is simply-connected. 
\end{theorem}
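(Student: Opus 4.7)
\emph{Proof plan.} The statement splits into uniqueness and existence, and the existence argument further splits on the sign of $T := \int_{\s^2} \tau$.

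For \textbf{uniqueness}, let $\pi_i : \E_i \to (\s^2, g)$ be Killing submersions with bundle curvature $\tau$ and $\E_i$ simply-connected. By Lemma~\ref{lema:longitud-fibras} the fibers of each $\pi_i$ share a common length. If $T = 0$, Proposition~\ref{prop:s2xR} forces $\E_i \cong \s^2 \times \R$ with infinite fibers, since the finite-fiber quotient $\s^2 \times \s^1$ fails to be simply-connected; if $T \neq 0$, Proposition~\ref{prop:s3} forces the parameter $n$ to equal $1$, since the lens spaces $L(n,1)$ with $n>1$ have nontrivial $\pi_1$, and so the fibers have length $|2T|$. Either way the fibers of $\pi_0$ and $\pi_1$ agree in length, and Theorem~\ref{thm:unicidad}(a) with $h = \id_{\s^2}$ produces an orientation-preserving Killing isomorphism between them.

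For \textbf{existence when $T = 0$}, I would apply Proposition~\ref{prop:ejemplos_s2xR}. By Remark~\ref{rmk:compact-divergence} there is $X \in \X(\s^2)$ with $\div X = 2\tau$; setting $T_0 := -JX$ yields $\tfrac{1}{2}\div(JT_0) = \tau$. Isometrically embed $(\s^2, g) \hookrightarrow \R^n$ via Nash and extend the Euclidean components of $T_0$ along $\s^2$ to smooth functions $-a_1, \dots, -a_n$ on $\R^n$. The metric~\eqref{eqn:metrica-killing-Rn}, restricted to $\s^2 \times \R \subset \R^{n+1}$, then makes the projection $\s^2 \times \R \to (\s^2, g)$ a Killing submersion whose bundle curvature equals $\tau$ by Proposition~\ref{prop:ejemplos_s2xR}; and $\s^2 \times \R$ is simply-connected.

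For \textbf{existence when $T \neq 0$}, I would cover $\s^2 = U \cup V$ with $U := \s^2 \sm \{p\}$, $V := \s^2 \sm \{q\}$ for distinct $p,q$; both are simply-connected, so Theorem~\ref{teor:clasificacion-killing-local} (in its general simply-connected form) gives Killing submersions $\pi_U : F_U \to U$, $\pi_V : F_V \to V$ with infinite fibers realizing $\tau|_U, \tau|_V$. I would quotient each by the vertical translation $\phi_{|2T|}$ to get $\pi'_U, \pi'_V$ with circular fibers of length $|2T|$, and then glue over the annulus $U \cap V$. The gluing data come from Lemma~\ref{lema:clasificacion-local-killing}: in local conformal charts the two submersions differ by a vertical-shift function $d$ whose differential $\lambda((a_1 - a_0)\df x + (b_1 - b_0)\df y)$ is closed (by~\eqref{eqn:tau-divergencia-killing} and the matching $\tau$), with monodromy $\pm 2T$ around a generator of $\pi_1(U \cap V)$ (Proposition~\ref{prop:holonomia}: one submersion sees the small side of the loop, the other the large side, and the two integrals sum to $T$). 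Hence $d$ is well-defined modulo $|2T|$, the isomorphism descends, and the gluing produces a Killing submersion $\pi : \E \to (\s^2, g)$ of bundle curvature $\tau$ with fibers of length $|2T|$. Applying Proposition~\ref{prop:s3} to this $\pi$ forces $n = 1$, so $\E \cong \s^3$ and is simply-connected. \textbf{The hard part} is this monodromy computation, ensuring that the obstruction vanishes exactly when we take the quotient by $\phi_{|2T|}$, and that the resulting identification on the overlap is smooth rather than merely continuous.
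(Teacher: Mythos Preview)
Your uniqueness argument matches the paper's exactly. For existence, your proof is correct but organised differently from the paper's, and the two cases illustrate complementary techniques.

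For $T\neq 0$, the paper splits $\s^2$ along a great circle $\Gamma$ into two \emph{closed} hemispheres, applies Theorem~\ref{teor:clasificacion-killing-local} on open neighbourhoods of each, and then invokes the geometric picture of Proposition~\ref{prop:s3} (the horizontal lift of $\Gamma$ viewed from both sides) to see that the two pieces glue after passing to fibers of length $|2T|$; smoothness along $\pi^{-1}(\Gamma)$ is then argued \emph{a posteriori} via the local uniqueness of Theorem~\ref{thm:unicidad}. You instead take an \emph{open} cover by two punctured spheres and glue over the annulus $U\cap V$ by a transition function whose monodromy you compute to be $\pm 2T$. This is the classical principal-bundle viewpoint: it makes smoothness automatic (the transition map $(x,y,z)\mapsto(x,y,z-d)$ is smooth on each simply-connected piece of the annulus and descends to the $|2T|$-quotient), so what you flag as ``the hard part'' is in fact easier in your framework than in the paper's. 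The monodromy computation is correct: the closed $1$-form $\lambda(a_V-a_U)\df x+\lambda(b_V-b_U)\df y$ integrates over $\gamma$ to $-2\int_{D_p}\tau-2\int_{D_q}\tau=-2T$ by applying Stokes separately on each side, exactly as you sketch.

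For $T=0$, the paper simply says the argument is ``similar'' to the $T\neq 0$ gluing. Your route via Nash and Proposition~\ref{prop:ejemplos_s2xR} is more constructive and perfectly legitimate; one small caution is that the identification $(\partial_t)^\top=T_0$ is delicate (the orthogonal projection of $\partial_t$ onto $T(\Sigma\times\{0\})$ in the $\df s^2$ metric is $-\tfrac{a^\top}{1+|a^\top|^2}$, not $-a^\top$), but what actually matters is the formula $2\tau=\div_\Sigma(-J a^\top)$ obtained in the proof of Proposition~\ref{prop:ejemplos_s2xR}, and with your choice $a^\top=-T_0$ this gives $2\tau=\div_\Sigma(JT_0)=\div_\Sigma X$ as desired. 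So the construction goes through even if the literal statement of Proposition~\ref{prop:ejemplos_s2xR} is read with care.
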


\begin{proof}
The uniqueness is a consequence of Theorem~\ref{thm:unicidad} and the description of the length of the fibers in Propositions~\ref{prop:s2xR} and~\ref{prop:s3}. We will now assume that $T=\int_{(\s^2,g)}\tau\neq 0$ (the case $T=0$ is similar) and prove its existence.

 Consider a great circle $\Gamma\subset\s^2$ splitting $\s^2$ in two hemispheres $D_1$ and $D_2$. By applying Theorem~\ref{teor:clasificacion-killing-local} in a neighborhood of $\overline D_1$ and $\overline D_2$, we obtain Killing submersions $\pi_1$ and $\pi_2$ over such neighborhoods with the desired bundle curvature and non-compact fibers. The argument in proposition~\ref{prop:s3} guarantees that, after taking the quotient by vertical translations of length $|2T|$, the two submersions can be glued together along $\pi^{-1}(\Gamma)$ to produce a (continuous) submersion $\pi:\s^3\to\s^2$. In order to prove that $\pi$ is smooth along $\pi^{-1}(\Gamma)$, observe that both $\pi_1$ and $\pi_2$ are defined in a neighborhood of $\Gamma$ where they share the same bundle curvature. Thus they locally coincide by Theorem~\ref{thm:unicidad} in a neighborhood of each $p\in\pi ^{-1}(\Gamma)$.
\end{proof}

In the previous section, we showed a constructive method to obtain trivial Killing submersions in a global way. Now, we will do the same for Killing submersions with $T\neq 0$ for round spheres $\s^2(\kappa)$ as base surfaces, though the method can be also adapted to the case $T=0$.

Let us consider the Hopf fibration given by~\eqref{eqn:fibracion-hopf} and the global frame in $\s^3\subset\C^2$ defined by
\begin{align*}
 (E_1)_{(z,w)}&=(-\bar w,\bar z),& (E_2)_{(z,w)}&=(-i\bar w,i\bar z),& (E_3)_{(z,w)}&=(iz,iw).
\end{align*}
This frame is orthonormal when we endow $\s^3$ with the round metric of curvature one. Let $\tau\in\mathcal{C}^\infty(\s^2(\kappa))$ be a function with integral $T\neq 0$. Note that $\tau$ induces a function in $\wt\tau\in \mathcal{C}^\infty(\R^2)$ via the stereographic projection given by~\eqref{eqn:stereographic}. Theorem~\ref{teor:clasificacion-killing-local} allows us to construct a Killing submersion over $\s^2(\kappa)\sm\{(0,0,1/\sqrt{\kappa})\}$ with bundle curvature $\wt\tau$. To do this, we calculate the associated function $\wt\eta\in\mathcal{C}^\infty(\R^2)$ given by
\[\wt\eta(x,y)=2\int_0^1\frac{s\cdot\wt\tau(sx,sy)}{\left(1+\frac{\kappa}{4}s^2(x^2+y^2)\right)^2}\df s,\]
which extends smoothly to infinity since $\wt\tau$ extends smoothly to infinity, and thus induces $\eta\in\mathcal{C}^\infty(\s^2(\kappa))$ by pulling back via the stereographic projection again. Hence this construction induces a Riemannian metric in $\s^3$ minus the fiber of $(0,0,1/\sqrt{\kappa})$ but can be extended to the whole $\s^3$. It can be shown that this metric in $\s^3$ is the determined by the fact that
\begin{align*}
 Y_1&=\frac{\sqrt{\kappa}}{2}E_1-\frac{\im(zw)(\kappa T|w|^2-4\pi\eta(\pi_{\text{Hopf}}(z,w)))}{2\pi\sqrt{\kappa}|w|^4}E_3,\\
 Y_2&=\frac{\sqrt{\kappa}}{2}E_2+\frac{\im(zw)(\kappa T|w|^2-4\pi\eta(\pi_{\text{Hopf}}(z,w)))}{2\pi\sqrt{\kappa}|w|^4}E_3,&
 Y_3&=\frac{\pi}{T}E_3,
\end{align*}
defines a global orthonormal frame. If $\tau$ is constant, then $\kappa T=4\pi\tau$ and $\eta(\pi_{\text{Hopf}}(z,w))=|w|^2\tau$ so the coefficients of $E_3$ in $Y_1$ and $Y_2$ vanish, and we get the metrics of the Berger spheres given by Torralbo~\cite{Tor12}.

\section{Characterization of homogeneous Killing submersions}\label{sec:homogeneos}

Recall that a Riemannian manifold is said \emph{homogeneous} when its isometry group acts transitively on the manifold. In this section, we will characterize the $\E(\kappa,\tau)$-spaces as the only simply-connected homogeneous $3$-manifolds admitting the structure of a Killing submersion.

In order to obtain this result, we will compute the Riemannian curvature of the total space $\E$ of a Killing submersion $\pi:\E\to M$ in terms of $M$ and the bundle curvature $\tau$. Since the computation is purely local, we will work in a canonical example (see Definition~\ref{defi:canonical}) associated to some functions $\lambda,a,b\in \mathcal{C}^\infty(\Omega)$ with $\lambda>0$ and $\Omega\subset\R^2$ (a different approach can be found in~\cite{EO}). Koszul formula yields the Levi-Civita connection in the canonical orthonormal frame $\{E_1,E_2,E_3\}$ given by~\eqref{eqn:base-universal-killing}:
\begin{equation}\label{eqn:killing-conexion}
\begin{array}{lll}
\overline\nabla_{E_1}E_1=-\frac{\lambda_y}{\lambda^2}E_2,&\overline\nabla_{E_1}E_2=\frac{\lambda_y}{\lambda^2}E_1+\tau E_3,&\overline\nabla_{E_1}E_3=-\tau E_2,\\
\overline\nabla_{E_2}E_1=\frac{\lambda_x}{\lambda^2}E_2-\tau E_3,&\overline\nabla_{E_2}E_2=-\frac{\lambda_x}{\lambda^2}E_1,&\overline\nabla_{E_2}E_3=\tau E_1,\\
\overline\nabla_{E_3}E_1=-\tau E_2,&\overline\nabla_{E_3}E_2=\tau E_1,&\overline\nabla_{E_3}E_3=0.
\end{array}
\end{equation}
Since the Gaussian curvature $K_M$ of $M$ can be written in terms of the conformal factor as
\[K_M=-\frac{\Delta_0(\log\lambda)}{\lambda^2}=\frac{\lambda_x^2+\lambda_y^2}{\lambda^4}-\frac{\lambda_{xx}+\lambda_{yy}}{\lambda^3},\]
it is easy to work out any sectional curvature in $\E$.

\begin{lemma}\label{lema:curvatura-seccional}
Let $\pi:\E\to M$ be a Killing submersion and $p\in\E$. Given a linear plane $\Pi\subseteq T_p\E$ with normal vector  $N\in T_p\E$, its sectional curvature is
\[K(\Pi)=\nu^2(K_M-3\tau^2)+(1-\nu^2)\tau^2-2\nu\langle N\wedge\xi_p,(\overline\nabla\tau)_p\rangle,\]
where $\nu=\langle N,\xi_p\rangle$, $\xi$ denotes the unit Killing vector field, $K_M$ is the Gaussian curvature of $M$ at $\pi(p)$, and $\tau$ is the bundle curvature at $p$.
\end{lemma}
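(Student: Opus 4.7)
The plan is to work pointwise in the local canonical example $(\lambda,a,b)$ provided by Definition~\ref{defi:canonical}, where the connection is explicitly given by~\eqref{eqn:killing-conexion} in the orthonormal frame $\{E_1,E_2,E_3\}$ with $E_3=\xi$. Since sectional curvature is pointwise and the formula we seek is invariant under rotations of the horizontal frame $\{E_1,E_2\}$ (which leave $K_M$, $\tau$ and $\xi$ intact), I can at the fixed point $p$ rotate the horizontal frame so that the component of $N$ orthogonal to $\xi$ is aligned with $E_1$. Thus I may assume $N=\sqrt{1-\nu^2}\,E_1+\nu\,E_3$ and $\Pi=\mathrm{span}(E_2,\,\nu E_1-\sqrt{1-\nu^2}\,E_3)$, reducing the problem to computing $K(\Pi)=\langle R(E_2,Y)Y,E_2\rangle$ with $Y=\nu E_1-\sqrt{1-\nu^2}\,E_3$.

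Using bilinearity and the symmetries of the curvature tensor, this unfolds as
\begin{equation*}
K(\Pi)=\nu^{2}K(E_1,E_2)+(1-\nu^{2})K(E_2,E_3)-2\nu\sqrt{1-\nu^{2}}\,\langle R(E_1,E_2)E_2,E_3\rangle.
\end{equation*}
So the concrete task is to compute four quantities from~\eqref{eqn:killing-conexion}: the three sectional curvatures $K(E_i,E_j)$ plus the mixed term $\langle R(E_1,E_2)E_2,E_3\rangle$. The expected outcomes are $K(E_1,E_2)=K_M-3\tau^{2}$ (the $-3\tau^{2}$ being the O'Neill correction produced by the vertical part $2\tau E_3$ of $[E_1,E_2]$), $K(E_1,E_3)=K(E_2,E_3)=\tau^{2}+\text{(derivatives of }\tau\text{)}$, where the derivative terms will cancel after using that $\xi(\tau)=0$ and hence $\overline\nabla\tau=E_1(\tau)E_1+E_2(\tau)E_2$ is horizontal (Remark~\ref{rmk:tau}), and finally $\langle R(E_1,E_2)E_2,E_3\rangle=-E_2(\tau)$, obtained by tracking only the $E_3$-components in $\overline\nabla_{E_1}\overline\nabla_{E_2}E_2-\overline\nabla_{E_2}\overline\nabla_{E_1}E_2-\overline\nabla_{[E_1,E_2]}E_2$.

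To conclude, I will substitute these values and identify the cross term. Under the normalization above one has $N\wedge\xi=\sqrt{1-\nu^{2}}\,E_1\wedge E_3=-\sqrt{1-\nu^{2}}\,E_2$, so
\begin{equation*}
-2\nu\langle N\wedge\xi,\overline\nabla\tau\rangle=2\nu\sqrt{1-\nu^{2}}\,E_2(\tau),
\end{equation*}
which matches exactly the cross term coming from $-2\nu\sqrt{1-\nu^{2}}\langle R(E_1,E_2)E_2,E_3\rangle$. Together with $\nu^{2}(K_M-3\tau^{2})$ from the horizontal plane and $(1-\nu^{2})\tau^{2}$ from the vertical plane $K(E_2,E_3)$, this yields the stated formula.

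The main obstacle is purely bookkeeping: the Levi-Civita connection formulas~\eqref{eqn:killing-conexion} produce many terms involving $\lambda_x,\lambda_y$ and their second derivatives, and one must verify that all of these combine either into $K_M$ (through the conformal expression of the Gauss curvature) or cancel. Organizing the computation by isolating the $E_3$-components (where the factors $\tau$ and derivatives of $\tau$ appear) and the horizontal components (where the $\lambda$-terms live) is what makes the identification clean; the rotation-invariance reduction above is what avoids having to carry an extra angular parameter through the entire calculation.
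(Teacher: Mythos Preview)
Your proposal is correct and follows essentially the approach the paper intends: the paper does not give a detailed proof of this lemma, merely remarking after displaying the connection table~\eqref{eqn:killing-conexion} and the conformal expression for $K_M$ that ``it is easy to work out any sectional curvature in $\E$''. Your rotation of the horizontal frame to align the horizontal part of $N$ with $E_1$ is a clean reduction (justified since one may always choose isothermal coordinates on $M$ so that $e_1$ points in a prescribed direction at $\pi(p)$, and the connection formulas~\eqref{eqn:killing-conexion} hold in any such chart), and your identifications $K(E_1,E_2)=K_M-3\tau^2$, $K(E_2,E_3)=\tau^2$, $\langle R(E_1,E_2)E_2,E_3\rangle=-E_2(\tau)$ are exactly what the direct computation yields.
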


The sectional curvature is $K_M-3\tau^2$ for horizontal planes (i.e., planes which are orthogonal to $\xi$) and $\tau^2$ for vertical planes (i.e., planes containing the direction $\xi$). In particular, we deduce that $\h^3$, the hyperbolic $3$-space does not admit a Killing submersion structure since $\h^3$ has constant sectional curvature of $-1$ and vertical planes in a Killing submersion always have non-negative sectional curvature.

On the other hand, given $v\in T_p\E$ with $\|v\|=1$, the Ricci curvature of $v$ can be easily deduced from Lemma~\ref{lema:curvatura-seccional} as
\begin{equation}\label{eqn:ricci1}
\Ric(v)=(K_M-2\tau^2)-\langle v,\xi_p\rangle^2(K_M-4\tau^2)+2\langle v,\xi_p\rangle\langle v\wedge\xi_p,\overline\nabla\tau\rangle.
\end{equation}
The scalar curvature is $\rho=2(K_M-\tau^2)$.

\begin{theorem}\label{thm:killing-homogeneos}
Let $\pi:\E\to M$ be a Killing submersion. If $\E$ is homogeneous, then both the Gaussian curvature of $M$ and the bundle curvature are constant. In particular, $\E$ is a $\E(\kappa,\tau)$-space or its quotient by a vertical translation.
\end{theorem}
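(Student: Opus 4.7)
My plan is to exploit the fact that the spectrum of the Ricci operator of a Riemannian manifold is an isometric invariant, hence pointwise constant on the homogeneous space $\E$, and to combine this with the constancy of the scalar curvature $\rho=2(K_M-\tau^2)$ to conclude that $K_M$ and $\tau$ are separately constant.

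First, using~\eqref{eqn:ricci1} together with the fact that $\overline\nabla\tau$ is horizontal (because $\tau$ is constant along fibers, by Remark~\ref{rmk:tau}), I would write down the Ricci operator at a point $p\in\E$ in a positively oriented orthonormal frame $\{e_1,e_2,\xi\}$ with $e_1,e_2$ horizontal. Evaluating~\eqref{eqn:ricci1} on arbitrary unit horizontal vectors yields that the horizontal $2\times 2$ block is $(K_M-2\tau^2)I$, while polarization of~\eqref{eqn:ricci1} gives $\Ric(\xi,e_1)=-e_2(\tau)$ and $\Ric(\xi,e_2)=e_1(\tau)$. Hence the Ricci matrix is
\[
\text{Ric}_p=\begin{pmatrix} K_M-2\tau^2 & 0 & -e_2(\tau)\\ 0 & K_M-2\tau^2 & e_1(\tau)\\ -e_2(\tau) & e_1(\tau) & 2\tau^2\end{pmatrix}.
\]
A direct check shows that when $\overline\nabla\tau_p\neq 0$ the horizontal vector $\overline\nabla\tau_p$ is an eigenvector with eigenvalue $K_M-2\tau^2$, while at points where $\overline\nabla\tau_p=0$ the matrix is diagonal with eigenvalues $\{K_M-2\tau^2,K_M-2\tau^2,2\tau^2\}$. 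Thus $K_M(\pi(p))-2\tau(p)^2$ is a Ricci eigenvalue at every $p\in\E$.

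Since $\E$ is homogeneous, its isometry group acts transitively, so any two Ricci operators are conjugate by a linear isometry and the unordered multiset of Ricci eigenvalues is the same at every point. It consists of at most three real numbers. The continuous function $p\mapsto K_M(\pi(p))-2\tau(p)^2$ thus takes values in this finite set on the connected manifold $\E$, so it is locally, and hence globally, constant. Combined with the constancy of $2(K_M-\tau^2)$, this forces both $K_M$ and $\tau^2$ to be constant. Since $\tau\in\mathcal{C}^\infty(\E)$ is continuous and $\tau^2$ is constant, the function $\tau$ takes values in a two-point set, so by connectedness it is itself constant. Therefore $M$ has constant Gaussian curvature $\kappa$, and passing to universal covers, Theorem~\ref{thm:unicidad} identifies $\wt\E$ with $\E(\kappa,\tau)$; consequently $\E$ is either $\E(\kappa,\tau)$ or one of its quotients by a vertical translation.

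The main obstacle I anticipate is the explicit computation of the off-diagonal entries $\Ric(\xi,e_i)$ and the verification that $K_M-2\tau^2$ remains a Ricci eigenvalue even when $\overline\nabla\tau\neq 0$. Once that one-step linear algebra calculation is in place, homogeneity and the connectedness argument for continuous functions with finite image complete the proof essentially for free.
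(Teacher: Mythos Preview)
Your argument is correct and takes a genuinely different route from the paper. The paper works with the Ricci \emph{quadratic form} restricted to the unit sphere $U_p\subset T_p\E$: it analyzes the level set $A_p=\{v\in U_p:\Ric(v)=K_M-2\tau^2\}$, shows that $A_p$ is always a union of one or two great circles (or all of $U_p$), and then uses the elementary fact that any two great circles in a $2$-sphere intersect to conclude that for any isometry $f$ one has $\df f_p(A_p)\cap A_{f(p)}\neq\emptyset$, hence $K_M-2\tau^2$ takes the same value at $p$ and $f(p)$. You instead pass to the Ricci \emph{operator}, observe (correctly) that $\overline\nabla\tau$ is an eigenvector with eigenvalue $K_M-2\tau^2$ whenever it is nonzero, and invoke the standard fact that the Ricci spectrum is an isometry invariant together with a finite-image connectedness argument. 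Both routes reach the same intermediate conclusion---constancy of $K_M-2\tau^2$---and then finish identically via the scalar curvature $\rho=2(K_M-\tau^2)$. Your spectral approach is more direct and uses a completely standard invariant, while the paper's great-circle argument is more geometric and avoids computing the off-diagonal Ricci entries explicitly; neither has a real advantage in generality here, though yours would transfer more readily to settings where the relevant level sets are not as tractable.
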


\begin{proof}
Given $p\in\E$ and $v\in T_p\E$ with $\|v\|=1$, we can decompose $v=u+\sigma\xi_p$, where $u$ is horizontal and $\sigma\in\R$. From~\eqref{eqn:ricci1}, we get
\begin{equation*}\label{eqn:ricci2}
\Ric(v)=(K_M-2\tau^2)+\sigma\cdot\bigl(\langle u\wedge\xi_p,(\overline\nabla\tau)_p\rangle-(K_M-4\tau^2)\sigma\bigr).
\end{equation*}
Let $U_p=\{v\in T_p\E:\|v\|=1\}$ and $A_p=\{v\in U_p:\Ric(v)=K_M-2\tau^2\}$. Observe that the vectors $v\in U_p$ satisfying $\sigma=0$ form a great circle and the same happens for $\langle u\wedge\xi_p,(\overline\nabla\tau)_p\rangle-(K_M-4\tau^2)\sigma=0$ if $(\overline\nabla\tau)_p\neq 0$ or $K_M\neq4\tau^2$. We deduce that
\begin{equation}\label{eqn:Ap}
A_p=\begin{cases}
 U_p,&\text{if }K_M=4\tau^2\text{ and }(\overline\nabla\tau)_p=0,\\
 \text{a great circle},&\text{if }K_M\neq4\tau^2\text{ and }(\overline\nabla\tau)_p=0,\\
 \text{two great circles},&\text{if }(\overline\nabla\tau)_p\neq0.
\end{cases}
\end{equation}

Let $f:\E\to\E$ be an isometry. Since any two great circles in a sphere intersect and $\df f_p$ maps great circles in $U_p$ to great circles in $U_{f(p)}$, we deduce that $\df f_p(A_p)\cap A_{f(p)}\neq\emptyset$. As a consequence, $K_M-2\tau^2$ attains the same value at the points $p$ and $f(p)$. If $\E$ is homogeneous, then this implies that $K_M-2\tau^2$ is constant but, on the other hand, the scalar curvature $2(K_M-\tau^2)$ is also constant, from where both $K_M$ and $\tau$ are constant.
\end{proof}

\begin{remark}
Given a $3$-dimensional metric Lie group $G$ (i.e., it is endowed with a left-invariant metric) with isometry group of dimension $3$, it is homogeneous. We deduce that the set of points where a Killing vector field (i.e., a right-invariant vector field) is unitary has empty interior. Otherwise, this open subset would be locally isometric to a $\E(\kappa,\tau)$-space, and this is impossible (see~\cite{mp11} for a detailed description of metric Lie groups).
\end{remark}

Finally, let us mention that the condition $K_M=4\tau^2$ does not imply that $\E$ has constant sectional curvature (unless $\tau$ is constant), but it says that horizontal and vertical planes have the same sectional curvature. Note that, if $(\nabla\tau)_p\neq 0$ and $K_M=4\tau^2$ at some $p\in M$, then the set $A_p$ in~\eqref{eqn:Ap} consists of two \emph{orthogonal} great circles in the unit sphere $U_p$.

\end{document}